\documentclass[12pt]{amsart}
\usepackage[utf8]{inputenc}
\usepackage{amssymb}
\usepackage{amsmath}
\usepackage{amsthm}
\usepackage{graphicx}
\usepackage{amscd}
\usepackage{epic}
\usepackage[alphabetic]{amsrefs}
\usepackage{xypic}
\usepackage[colorlinks, citecolor=magenta]{hyperref}
\usepackage[left=1.5in,right=1.5in,top=1.5in,bottom=1.5in]{geometry}
\usepackage{soul}
\usepackage[pagewise]{lineno}
\usepackage{comment}
\usepackage{enumitem}
\usepackage{mathrsfs}
\usepackage{tikz-cd}

\theoremstyle{plain}
\newtheorem{theorem}{Theorem}[section]

\newtheorem{lemma}[theorem]{Lemma}

\newtheorem{definition-lemma}[theorem]{Definition-Lemma}
\theoremstyle{remark}
\newtheorem{remark}[theorem]{Remark}

\theoremstyle{definition}

\newcommand{\Pic}[0]{\operatorname{Pic}}

\def\NE{\overline{\operatorname{NE}}}

\def\NA{\overline{\operatorname{NA}}}

\def\>{\geq}
\newcommand{\mbQ}{\mathbb{Q}}
\newcommand{\mbR}{\mathbb{R}}

\newcommand{\mult}{{\rm mult}}

\newcommand{\mbC}{\mathbb{C}}
\newcommand{\mbZ}{\mathbb{Z}}

\def\>{\geq}

\def\mult{\operatorname{mult}}

\def\dim{\operatorname{dim}}

\def\NA{\operatorname{\overline{NA}}}

\def\NE{\operatorname{\overline{NE}}}
\def\Nef{\operatorname{Nef}}

\def\Specan{\operatorname{Specan}}

\theoremstyle{definition}
\newtheorem{definition}[theorem]{Definition}
\theoremstyle{definition}

\numberwithin{equation}{section}

\theoremstyle{remark}
\newtheorem{claim}[theorem]{Claim}

\author{Swapnajit Das}
\address{School of Maths, Tata Institute of Fundamental Research, Mumbai}
\email{das@math.tifr.res.in}

\author{Roktim Mascharak}
\address{Department of Mathematics, King's College London, Strand,
London WC2R 2LS, UK}
\email{roktim.mascharak@kcl.ac.uk}

\title[The log Sarkisov Program]{Log Sarkisov Program for Strongly $\mbQ$- factorial compact K\"ahler Threefolds}

\begin{document}
\maketitle
\tableofcontents
\section{Introduction}
The Minimal Model Program plays a central role in the birational classification of projective varieties. It predicts that for any smooth projective variety $X$, one should obtain either a good minimal model or a Mori fiber space after a sequence of birational contractions. Two different sequences of contractions may lead to two different outputs. Or in other words, the minimal model and Mori fiber space both are not necessarily unique. Therefore a natural problem is to understand how different outputs of the MMP are related. In the case of projective varieties, Kawamata proved that any two minimal models $(X,\Delta)$ and $(Y,\Gamma)$ are connected  by a sequence of $K_X+\Delta$-flops where $\sigma: X \dashrightarrow Y$ is the birational map and both pairs have klt singularities (see \cite{Kaw08}). Similarly, any two Mori fiber spaces are connected by standard Sarkisov links. This was proved in \cite{BM97} for dimension $3$ and in \cite{HM13} for arbitrary dimension.\\
The MMP was later extended to compact K\"ahler threefolds (see \cites{HP15, HP16,DH25}), and recently a breakthrough result  by Hacon-Xie in \cite{HX26}  was obtained for compact K\"ahler varieties of arbitrary dimension. A result similar to that in \cite{Kaw08} is also proved  in \cite[Theorem 3.27]{DHY26} for compact K\"ahler varieties of dimension $3$. Thus it is natural to expect that the Sarkisov Program should also hold for compact K\"ahler varieties. The purpose of this article is to establish a log Sarkisov Program for strongly $\mbQ$-factorial compact K\"ahler threefolds. More precisely, we prove the following:
 \begin{theorem}\label{SP}
     Let
      \item  \[
 \begin{tikzcd}
 (X,B_X) \arrow[r, dashrightarrow] \arrow{d}&  (X',B_{X'})\arrow{d}\\
S &  S' 
\end{tikzcd}
\] 
 be a bimeromorphic map between two log Mori fiber spaces of dimension three with strongly $\mbQ$-factorial klt singularities, induced by a log MMP relation between them. Then there exists a log bimeromorphic category $\mathcal{C}_{\theta}$ so that there is an algorithm to decompose $\Phi$ into a finite composite of four types of links in the log birational category $\mathcal{C}_{\theta}$, once all the values of $\theta$ stay $\leqslant \epsilon$ for some $0\leqslant \epsilon <1$.
 \end{theorem}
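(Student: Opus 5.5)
We follow the Bruno--Matsuki strategy (\cite{BM97}, a logarithmic version of Corti's argument for the Sarkisov program in dimension three), replacing every projective input by its compact K\"ahler counterpart: the K\"ahler threefold log MMP of \cites{HP15, HP16, DH25} (relative versions over a base), the cone and contraction theorems for strongly $\mbQ$-factorial klt pairs, and termination of flips. The category $\mathcal{C}_{\theta}$ is the log bimeromorphic category whose objects are pairs $(Z,B_Z)$ with strongly $\mbQ$-factorial klt singularities. In it, boundary coefficients are allowed to change only in the controlled way prescribed by a parameter $\theta$ (the coefficient given to exceptional divisors created along the way). The hypothesis $\theta\le\epsilon<1$ guarantees that every pair produced stays klt. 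The four link types are the usual ones: type I and type II links begin with an extraction of a divisor, type III and type IV links begin with a contraction or change of the base, and all intermediate steps are log flips over the base.

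\textbf{Step 1: the Sarkisov degree.} Fix a sufficiently divisible ample (K\"ahler) Cartier-type divisor $H'$ on $X'$ of the form $-\mu'(K_{X'}+B_{X'})+A'$, with $A'$ pulled back from $S'$. Let $\mathcal H_X$ be its strict transform on $X$ and put
\[
\mu=\inf\{\,t : K_X+B_X+\tfrac1t\mathcal H_X \text{ is nef over } S\,\}.
\]
Next, on a common resolution, define the log canonical threshold $\lambda$ of $(X,B_X+\tfrac1{\mu}\mathcal H_X)$, with the boundary weights governed by $\theta$. Finally let $e$ be the number of crepant exceptional divisors, that is, divisors of discrepancy $<\epsilon$-type over the pair, counted as in \cite{BM97}. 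The Sarkisov degree is the triple $(\mu,\lambda,e)$ ordered lexicographically. We must show that $\mu$ and $\lambda$ take values in a set satisfying the DCC/ACC needed for the argument. We get this from the boundedness of denominators: $H'$ is fixed Cartier, and the coefficients of $B$ lie in a finite set determined by $\theta$ and $B$.

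\textbf{Step 2: the Noether--Fano inequality and construction of links.} We prove the log Noether--Fano criterion: if $\mu\le\mu'$, $(X,B_X+\tfrac1\mu\mathcal H_X)$ is canonical (resp.\ klt in the log sense), and $K_X+B_X+\tfrac1\mu\mathcal H_X$ is nef, then $\Phi$ is an isomorphism of log Mori fiber spaces. The proof compares discrepancies on a common resolution using negativity. Here we use that K\"ahler classes pull back to nef and big classes, and that the negativity lemma holds bimeromorphically. If the criterion fails, there are two cases. If $\lambda<1/\mu$, we extract a divisor computing $\lambda$ via a relative MMP; this is a divisorial extraction in the K\"ahler category and needs the K\"ahler MMP over $X$. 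We then run a $(K+B+\lambda\mathcal H)$-MMP over $S$, which produces a link of type I or II. Otherwise $K_X+B_X+\tfrac1\mu\mathcal H_X$ is not nef, and we run a two-ray game over the base to obtain a link of type III or IV. The two-ray game needs the relative Picard number two situation and the relative cone theorem. In the K\"ahler setting this works because $S$ is projective over itself in the relevant cases, or because fibers are projective; \cite{HP16} gives projectivity of Mori fiber structures.

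\textbf{Step 3: termination.} We show that each link strictly decreases the degree: either $\mu$ drops, or $\mu$ stays the same and $\lambda$ increases, or both stay and $e$ drops. We then exclude infinite sequences using the finiteness of the sets from Step 1 together with the fact that $e$ is a nonnegative integer.

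\textbf{Main obstacle.} The hardest part is running the two-ray game and the divisorial extractions in the K\"ahler (non-projective) category. We need relative K\"ahler classes and the relative cone/contraction theorem over $S$ with $\rho(Z/S)=2$. We also need that every intermediate object stays strongly $\mbQ$-factorial and klt with boundary in $\mathcal C_\theta$. This is where the assumption $\theta\le\epsilon<1$ enters. Our first approach is to observe that over a positive-dimensional base $S$ the morphism is locally projective, so the projective relative MMP applies. When $S$ is a point, $X$ is a Fano-type K\"ahler threefold with rational singularities and is therefore projective, so the problem reduces to \cite{BM97}.
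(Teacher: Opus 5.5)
\textbf{There is a genuine gap, located exactly where the K\"ahler setting differs from \cite{BM97}.}

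The only new case is when $X$ is non-projective. If one object of $\mathcal{C}_\theta$ is projective, all are, since a Moishezon K\"ahler space with rational singularities is projective. If $\dim S\le 1$, then $S$ is projective, hence so is $X$. So one may assume $\dim S=\dim S'=2$ with $S'$ and $X'$ non-projective, and then your Step 1 object does not exist. There is no ample $A'$ on $S'$, and no ample or Cartier homaloidal divisor $H'$ on $X'$. The available substitute is a K\"ahler form $\beta_{X'}=-\mu'(K_{X'}+B_{X'})+\phi'^*\omega'$. Its transform $\beta_X=\sigma_*\sigma'^*\beta_{X'}$ is only a closed positive $(1,1)$-current (a b-current), not the strict transform of a divisor or linear system.

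Consequently every pair $(X,B_X+c\beta_X)$ in the program is a generalized pair whose moduli part is a current. The threshold $\lambda$, the maximal extraction, the Noether--Fano comparison, and the MMPs over $S$ and $T$ must all be run in that generalized K\"ahler framework:
\begin{itemize}
\item the cone theorem of \cite{DHY26};
\item the MMP and extraction results of \cite{HX26};
\item the canonical bundle formula of \cite{HP24}, used to contract $\phi_*P$ on $S$.
\end{itemize}
Your remark that $X\to S$ is projective does not remove this, because the homaloidal current is global on $X$, not relative to $S$. The paper also shows that type IV links never occur here, because the MMP over $T$ must contract a divisor.

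\textbf{The same issue breaks your termination step.} The argument ``bounded denominators because $H'$ is fixed Cartier'' is unavailable: $\beta_{X'}$ is a real class. In any case, the $\mathcal{C}_\theta$-threshold $\min_E \bigl(a(E,X,B_X)+\theta(E)\bigr)/m_E$ has real numerators. The missing ideas are these.
\begin{itemize}
\item $\mu$ is in fact constant, because $\dim S=2$ and the links do not change the general fiber. Type III links drop $\rho$. So no DCC statement for $\mu$ is needed.
\item For $\lambda$, write $\beta_{X'}=\sum b_j\eta_j$ with $\eta_j\in H^2(X',\mbZ)$ nonnegative on curves (the NQC decomposition, Lemma \ref{lem:co-betaX'}).
\item Prove an ACC statement for generalized lc thresholds with current moduli part. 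This is done by passing to Stein opens, where $H^2(\mathcal{O})=0$ turns integral classes into divisors, and then applying \cite{HX26B}.
\item With $\alpha=\lim 1/\lambda_i$, this ACC statement shows $(X_i,B_{X_i}+\alpha\beta_{X_i})$ is glc for $i\gg0$. Improvement of $\alpha$-discrepancies along the links, together with finiteness of local $c_\theta$-thresholds on a fixed $X_K$, then shows it is gklt.
\item Finally, the infinitely many distinct extracted divisors $E_i$, each with negative $\alpha$-discrepancy over the gklt pair on $X_K$, give a contradiction.
\end{itemize}
Without some ACC input of this kind, your Step 3 does not exclude an infinite sequence of type I/II links with $1/\lambda_i$ strictly increasing to a limit.
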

A Sarkisov link between two Mori fiber spaces $\phi:X \to S$ and $\psi: Y \to T$ is one of the following four types:

\begin{center}
\begin{tabular}{cccc}
\textbf{I} & \textbf{II} & \textbf{III} & \textbf{IV} \\
\begin{tikzcd}
Z \arrow[r, dashed] \arrow[d] & Y \arrow[d, "\psi"] \\
X \arrow[d, "\phi"] & T\arrow[dl] \\
S &
\end{tikzcd}
&
\begin{tikzcd}
Z \arrow[r, dashed] \arrow[d] & Z' \arrow[d] \\
X \arrow[d, "\phi"] & Y \arrow[d, "\psi"] \\
S \arrow[r, equal] & T
\end{tikzcd}
&
\begin{tikzcd}
    X \arrow[r, dashed] \arrow[d, "\phi"] & Z' \arrow[d] \\
    S \arrow[rd] & Y \arrow[d, "\psi"] \\
    & T
\end{tikzcd}
&
\begin{tikzcd}
X \arrow[rr, dashed] \arrow[d, "\phi"] & & Y \arrow[d, "\psi"] \\
S \arrow[rd, "s"] & & T \arrow["t", ld] \\
& R &
\end{tikzcd}
\\
\end{tabular}
\end{center}

\vspace{1 cm}
The proof follows the general strategy of the three-dimensional log Sarkisov program in \cite{BM97}, but several parts require new arguments in the K\"ahler setting. Since we have a K\"ahler class as a replacement of ample divisor, the homaloidal transform of a K\"ahler class is naturally a closed b-$(1,1)$-current. Thus it is necessary to work with generalized pairs whose moduli part is represented by a current. We then define Sarkisov degree. As the log Sarkisov program is already known for projective varieties, we explicitly deal with non-projective compact K\"ahler strongly $\mbQ$-factorial threefolds. We then decompose the bimeromorphic map $\Phi:X \dashrightarrow X'$ as a composition of standard Sarkisov links. Note that, we show that for strongly $\mathbb{Q}$-factorial non-projective K\"ahler three-folds links of type \textbf{IV} never appears in the Sarkisov factorization.\\
Our main challenge  is to show termination of any sequence of links. For this purpose, we prove a version of the ACC statement for glc thresholds on compact complex spaces  in the Fujiki class $\mathcal{C}$ with rational singularities. By using this ACC statement, we are able to prove the termination of links.\\

 \textbf{Organization of the article:} In Section \ref{sec:pre}, we recall some standard notations and definitions used in the K\"ahler minimal model program. In Section \ref{sec: decom}, we recall strongly $\mbQ$-factorial compact K\"ahler varieties, construct the log bimeromorphic category $\mathcal{C}_{\theta}$, define the Sarkisov degree and decompose the bimeromorphic map into Standard Sarkisov links. Section \ref{sec: ACC} establishes the required ACC statement for generalized log canonical thresholds with the moduli part being a current. Finally, in Section \ref{sec:term}, we use this ACC statement to prove the termination, thereby completing the proof of Theorem \ref{SP}.\\

\textbf{Acknowledgement:}
We would like to thank Prof. Omprokash Das and Prof. Calum Spicer for helpful discussions.The second author is supported by the EPSRC Centre for Doctoral Training in Geometry and Number Theory (LSGNT)\\

\textbf{AI Disclosure:}
The authors declare that no AI tools were used in the preparation of this manuscript.

 \section{Preliminaries}\label{sec:pre}
 An analytic variety $X$ is a reduced and irreducible complex space. A holomorphic map $f:X\rightarrow Y$ between two complex spaces is called a morphism. A small map between two analytic varieties is a bimeromorphic map $\phi:X\dashrightarrow X'$ between two normal analytic varieties such that there are closed analytic subsets $Z\subset X$ and $Z'\subset X'$ with $codim_X Z\geqslant 2$ and $codim_{X'}Z'\geqslant 2$ and $\phi|_{X\setminus Z}:X\setminus Z \rightarrow X'\setminus Z' $ is an isomorphism. A $(1,1)$ class $\alpha\in H^{1,1}_{BC}(X)$ is called general (resp. very general) if $\alpha$ is not contained in any finite union (resp. countable union) of analytic subvarieties of $H^{1,1}_{BC}(X)$.\\

 \begin{definition}
     Let $X$ be a normal analytic variety. The canonical sheaf is defined as $\omega_X:= (\wedge^{\dim X}\Omega^1_X)^{**}$. Unlike algebraic varieties, $\omega_X$ does not correspond to a Weil divisor $K_X$ such that $\omega_X=\mathcal{O}_X(K_X)$. But by an abuse of notation we will denote $\omega_X$ by $K_X$, and when we say $K_X$ is a canonical divisor we actually mean the canonical sheaf. This doesn't create a problem as we will be intersecting $\omega_X$ with subvarieties while running the MMP.\\
     A compact normal variety is called $\mbQ$-factorial if for every prime Weil divisor $D\subset X$ there is a $m\in \mbZ^+$ such that $mD$ is Cartier and there is a $k\in \mbZ^+$ such that $(\omega_X^{\otimes k})^{**}$ is a line bundle on $X$.\\
     For a normal analytic variety $X$ and an $\mbR$-divisor $B$ we say that $K_X+B$ is $\mbR$-Cartier, if locally around any point $x\in X$ we can choose a divisor $K_X$ such that $\mathcal{O}_X(K_X)=\omega_X$ and $K_X+B$ is $\mbR$-Cartier.  In this case we define the singularities of the pair $(X,B)$ as in \cite{KM}.
 \end{definition}
 \begin{definition}
     An analytic variety is called K\"ahler if there exists a positive closed real $(1,1)$ form $\omega\in \mathcal{A}^{1,1}_{\mbR}(X)$ such that the following holds: for every point $x\in X$ there exists an open neighbourhood $U$ of $x$ and a closed embedding $i_U:U\hookrightarrow V$ into an open set $V\subset \mbC^N$, and a strictly plurisubharmonic $\mathcal{C}^{\infty}$ function $f:V\rightarrow \mbR$ such that $\omega|_{U\cap X_{sm}}=(i\partial\overline{\partial}f)|_{U\cap X_{sm}}$. Here $X_{sm}$ is the smooth locus of $X$. 
 \end{definition}
 For the definitions of Bott-Chern cohomology class, $\NA(X)$ and the nef cone we refer to \cite[Theorem $2.2$]{DH25}.
 \begin{definition}
     If $X$ is a normal K\"ahler variety and $\omega\in H^{1,1}_{BC}(X)$, then we say that $\omega$ is a modified K\"ahler class if there exists a bimeromorphic morphism $\nu:X'\rightarrow X$ and a K\"ahler form $\omega'$ such that $\nu_*\omega'=\omega$.\\
     A closed positive bidegree $(1,1)$ current $T$ with local potential is called a K\"ahler current if $T\geqslant \omega$ for some $(1,1)$ K\"ahler form $\omega$. A class $\alpha\in H^{1,1}_{BC}(X)$ is called a big class if it contains a K\"ahler current.
 \end{definition}
 In this article we will face several instances where we will have to deal with relative bigness and nefness for $(1,1)$ classes. So next we define these.
 \begin{definition}
     Let $\pi:X\rightarrow S$ be a proper morphism of normal K\"ahler varieties such that $S$ is compact. Let $\beta$ be a closed $(1,1)$ current with local potentials. We say that the class $[\beta]$ is relatively K\"ahler (or K\"ahler over $S$) if $[\beta+\pi^*\omega_S]\in H^{1,1}_{BC}(X)$ is a K\"ahler class for some K\"ahler form $\omega_S$ on $S$, and we say that the class $[\beta]$ is relatively nef if $[\beta +\omega]$ is relatively K\"ahler for every relatively K\"ahler class $[\omega]$ on $X$.
 \end{definition}
 It is well known that if a class $\beta\in H^{1,1}_{BC}(X)$ is relatively K\"ahler (resp. relatively nef), then its restriction to each fiber is K\"ahler (resp. nef). By abuse of notation we will say that a closed bi-degree $(1,1)$ current $T$ with local potentials is relatively K\"ahler or relatively nef over $S$ if its class $[T]\in H^{1,1}_{BC}(X)$.\\
 To untwist the bimeromorphic maps between log Mori fiber spaces we will need to run MMP for generalized pair, which we will start defining in this set up.
 \begin{definition}
    Let $X$ be a normal analytic variety. A closed b-$(1,1)$ current $\beta$ is a collection of closed bidegree $(1,1)$ currents $\beta_{X'}$ on all proper bimeromorphic models $X'\rightarrow X$ such that if $p:X_1\rightarrow X_2$ is a bimeromorphic model, then $p_*\beta_{X_1}=\beta_{X_2}$.
 \end{definition}
Given a closed positive $(1,1)$ current $\beta$ on $X$ with local psh potentials, we can construct a well defined b-$(1,1)$ current as in \cite[Subsection $2.1$]{DHY26}. We move on to define generalized pairs.
\begin{definition}
    Let $f:X\rightarrow S$ be a proper morphism of normal K\"ahler varieties, with $S$ compact, $\nu:X'\rightarrow X$ a resolution, $B'$ a $\mbR$-divisor on $X'$ with simple normal crossings support such that $B:=\nu_*B'\geqslant 0$ and $\beta$ a closed b-$(1,1)$ current. We say that $(X,B+\beta)$ is a generalized pair if 
    \begin{enumerate}
        \item $\beta$ is a positive closed b-$(1,1)$ current that descends to $X'$,
        \item $[\beta_{X'}]\in H^{1,1}_{BC}(X')$ is nef over $S$, and
        \item $[K_{X'}+B'+\beta_{X'}]=\nu^*\gamma $ for some $\gamma\in H^{1,1}_{BC}(X).$
    \end{enumerate}
\end{definition}
In this article we will need to work with pairs with generalized log canonical singularities, which we define as in \cite[Definition $2.9$]{DHY26}.
\begin{definition}
    Let $P$ be a prime Weil divisor over $X$. We define the generalized discrepancy $a(P,X,B+\beta)$ by going to a log resolution $\nu:X'\rightarrow X$ of $(X,B+\beta)$ where $P\subset X'$ is a Weil divisor in $X'$ and $a(P,X,B+\beta):=-\mult_P(B')$.\\
    We say that a pair $(X,B+\beta)$ is generalized lc if for some log resolution $\nu':X'\rightarrow X$, we have $a(P,X,B+\beta)\geqslant -1$ for all prime divisors $P\subset X'$.
\end{definition}
For more details on generalized pairs and b-$(1,1)$ currents we refer to \cite{DHY26}.\\
\section{Sarkisov Degree and decomposition into Sarkisov links}\label{sec: decom}
In this section we develop the Sarkisov Program for K\"ahler threefold pairs with klt singularities. We will follow the treatment of \cite{BM97}. For the Sarkisov Program to work in a similar manner a priori it is not sufficient to assume $\mbQ$-factoriality of the involved varieties. We will need something stronger which we define below.
\begin{definition}
    A normal K\"ahler variety $X$ is said to be strongly $\mbQ$-factorial if for every reflexive sheaf $\mathcal{F}$ of rank one there exists an $m$ (depending upon $\mathcal{F}$ ) such that $(\mathcal{F}^{\otimes m})^{**}$ is a line bundle.
\end{definition}
The next lemma will help us to define the Sarkisov triplet in our setup, more precisely this will be the key to define the homaloidal transformation.
\begin{lemma}
    Let $f:X'\rightarrow X$ be a projective bimeromorphic morphism between strongly $\mbQ$-factorial K\"ahler varieties. If $[\phi']\in H^{1,1}_{BC}(X')$ then $[f_* \phi']\in H^{1,1}_{BC}(X)$.
\end{lemma}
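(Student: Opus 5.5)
The pushforward $f_*\phi'$ of a closed $(1,1)$ current is always a closed $(1,1)$ current on $X$. The real issue is that a class in $H^{1,1}_{BC}(X)$ must be represented by a closed $(1,1)$ form or current \emph{with local potentials}, and $f_*\phi'$ need not have local potentials near the image of the exceptional locus. So the plan is to show that the pushforward class is represented by something with local $\partial\bar\partial$-potentials. Strong $\mbQ$-factoriality of $X$ is what makes this possible.

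\textbf{Step 1: reduce to line bundles plus a pullback.} Since $f$ is projective bimeromorphic and the question is local over $X$, we can work locally over a Stein open $U\subset X$. Over $U$, the Bott–Chern class $[\phi']|_{f^{-1}U}$ has local potentials on $X'$. I would first use the exact sequence
\[
0\to \mathcal{H}_{X'}\to \mathcal{C}^\infty_{X'}\to \ldots
\]
description of $H^{1,1}_{BC}$ via $H^1(X',\mathcal{H}_{X'})$ (sheaf of real parts of holomorphic functions, i.e.\ pluriharmonic functions), as in \cite{HP16}/\cite{DH25}.

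Over the Stein set $U$, the group $H^1(f^{-1}U,\mathcal{H})$ maps to $H^2(f^{-1}U,\mbR)$, and relative classes are generated by real combinations of classes of line bundles on $f^{-1}U$. Since $f$ is projective, the relative $H^{1,1}$ is spanned by $f$-exceptional divisors and relatively ample divisors. More precisely, I will write
\[
[\phi'] = f^*\gamma + \sum_i a_i [L_i]
\]
locally, with $L_i$ line bundles on $X'$. Globally, the same decomposition holds in $H^{1,1}_{BC}(X')$ modulo $f^*H^{1,1}_{BC}(X)$, i.e.\ in $N^1(X'/X)$, which for projective $f$ is generated by Cartier divisors.

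\textbf{Step 2: push forward the divisor part.} For a Cartier divisor $D$ on $X'$, $f_*D$ is a Weil divisor on $X$. The sheaf $\mathcal{O}_X(f_*D)$ is reflexive of rank one, so strong $\mbQ$-factoriality gives an $m$ with $\mathcal{O}_X(mf_*D)$ a line bundle. Hence $f_*[D]=\frac1m c_1(\mathcal{O}_X(mf_*D))$ lies in $H^{1,1}_{BC}(X)$. A smooth hermitian metric on this line bundle gives a representative with local potentials. Moreover, $f_*$ of the current of integration $[D]$ equals $[f_*D]$, which differs from this smooth form by $dd^c$ of a quasi-psh function. So the pushforward current also has local potentials.

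\textbf{Step 3: handle the pullback part and assemble.} For the pullback part, $f_*f^*\gamma=\gamma$ because $f$ is bimeromorphic and $f_*\mathcal{O}_{X'}=\mathcal{O}_X$. By linearity, $[f_*\phi'] = \gamma+\sum a_i f_*[L_i]$ is a Bott–Chern class on $X$. The remaining point is independence of the choice of representative. If $\phi'_1-\phi'_2=dd^c u$ on $X'$, then $f_*(dd^c u)=dd^c f_*u$. Here $f_*u$ is a function defined off a codimension-$2$ set, and it extends by normality, being a locally bounded (or quasi-psh difference) function; so the pushforward map is well defined on classes.

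\textbf{Main obstacle.} I expect the main obstacle to be Step 1: showing that $H^{1,1}_{BC}(X')/f^*H^{1,1}_{BC}(X)$ is spanned by classes of $f$-exceptional or Cartier divisors in the analytic setting. I would try to deduce this from the relative Leray sequence together with $R^1f_*\mathcal{O}_{X'}=0$ (rational singularities in the klt case), which identifies the relative part with $H^0(X,R^2f_*\mbR)$ of algebraic type since $f$ is projective.
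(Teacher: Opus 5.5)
Your outline is the paper's proof, spelled out. The paper asserts that, because $X$ is strongly $\mathbb{Q}$-factorial, $N^1(X'/X)=N^1(X')/f^*N^1(X)$ is spanned by $f$-exceptional divisors. It then picks an exceptional $E$ with $[\phi'+E]=f^*\phi$ and concludes $[f_*\phi']=[f_*(\phi'+E)]=\phi$. Your Step 2 is exactly what turns ``spanned by Cartier divisors'' into ``spanned by exceptional divisors'': if $f_*D$ is $\mathbb{Q}$-Cartier, then $D-f^*f_*D$ is $f$-exceptional. Your Step 3 is the paper's last line. So both arguments stand or fall with the claim you isolate in Step 1: every class in $H^{1,1}_{BC}(X')$ is, modulo $f^*H^{1,1}_{BC}(X)$, a real combination of classes of \emph{global} line bundles on $X'$. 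The paper gives no argument for this, and the one you sketch does not prove it.

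The gap is local versus global. Your Leray argument with $R^if_*\mathcal{O}_{X'}=0$ only shows that over a small Stein $U\subset X$ the restriction $[\phi']|_{f^{-1}U}$ is a combination of line bundles on $f^{-1}U$. In other words, the image in $H^0(X,R^2f_*\mathbb{R})$ is, stalk by stalk, spanned by local line bundles. Projectivity of $f$ gives you one global relatively ample line bundle, not a spanning set. Strong $\mathbb{Q}$-factoriality, however, only constrains reflexive sheaves defined on all of $X$. A local Weil divisor $f_*D_i$ on $U$ need not be $\mathbb{Q}$-Cartier: think of the node of a factorial nodal quintic threefold, which is projective and hence strongly $\mathbb{Q}$-factorial. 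So Step 2 cannot be run on $U$.

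In the non-projective setting the global claim is genuinely non-trivial. Take the blow-up $f\colon X'\to X$ of a node. Strong $\mathbb{Q}$-factoriality forces every global line bundle on $X'$ to have the same degree on the two rulings of the exceptional quadric. Without a Lefschetz-type argument, nothing in your proof forces every class in $H^{1,1}_{BC}(X')$ to do the same. If some class separated the rulings, then so would a K\"ahler form $\omega'$, and $f_*\omega'$ would have no local potential at the node. So this step needs an actual proof or an extra hypothesis.

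Two smaller points. First, if the generating $L_i$ are line bundles without meromorphic sections, Step 2 should push forward the reflexive sheaf $(f_*L_i)^{**}$, with $M_i:=((f_*L_i)^{**})^{[m]}$ the resulting line bundle, rather than a Weil divisor. This is where the \emph{strong} hypothesis really enters; for honest Cartier divisors ordinary $\mathbb{Q}$-factoriality already suffices. Projectivity of $f$ is what makes $L_i^{m}\otimes f^*M_i^{-1}$ meromorphically trivial off $\operatorname{Exc}(f)$. Second, in Step 3 no extension ``by normality'' is needed. $f_*u$ is simply the pushforward current, and $f_*dd^cu=dd^cf_*u$ holds for proper $f$.
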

\begin{proof}
Since $X$ is strongly $\mbQ$- factorial, $N^1(X'/X)= N^1(X')/f^*N^1(X)$ is generated by exceptional divisors. Hence $\phi'+E=0\in N^1(X'/X)$. But then $[\phi'+E]\in f^*N^1(X)$. So $[\phi'+E]=f^*\phi$ for some $\phi\in H^{1,1}_{BC}(X)$. And hence $[f_*\phi']=[f_*(\phi'+E)]=\phi$.
\end{proof}
Next we define what we mean by a MMP relation.
\begin{definition}
    Two or more K\"ahler varieties are log MMP related if and only if they are the end results of $K+B$-MMP over $\Specan\mbC$ starting from one log pair consisting of a nonsingular K\"ahler variety and a boundary divisor with only normal crossings.
\end{definition}
Next we define the category which should contain all our objects of study. This category is identical to its projective variety counterpart as in \cite{BM97}.
\begin{definition}\label{cat}
    We fix a  bimeromorphic class  and fix a variety $X$. Let $\theta$ be a function from the set of all prime divisors over $X$ to the real numbers $\in [0,1)$\\
    Note that the function $\theta$ is independent of the choice of $X$ in a fixed bimeromorphic class.
    A log bimeromorphic category associated to $\theta$ is the category of strongly $\mbQ$-factorial log K\"ahler varieties with klt singularities $(X,B_X=\sum d_k D_k)$ which satisfy the conditions 
    \begin{enumerate}
        \item $d_k=\theta(D_k)$
        \item $a(E,X,B_X)>-\theta(E)$ if $E$ is exceptional over $X$.
    \end{enumerate}
\end{definition}
Now our target is to replicate \cite[Lemma $3.6$]{BM97}. But let us first define the homaloidal transformation.
\begin{definition}\label{homaloidal}{\cite[Definition $3.8$]{BM97}}
    Let $\Phi$
      \item  \[
 \begin{tikzcd}
 (X,B_X) \arrow[r, dashrightarrow] \arrow{d}&  (X',B_{X'})\arrow{d}\\
S &  S' 
\end{tikzcd}
\] 
 be a map of Mori fiber spaces with strongly $\mbQ$-factorial compact K\"ahler threefolds, induced by a log MMP relation. Fix a K\"ahler form $\omega'$ on $S'$. We can choose $\mu'>0$ such that  $-\mu' (K_{X'}+B_{X'})+\phi^{'*} \omega' $ is also a K\"ahler form on $X'$. We define $\beta_{X'}:=-\mu' (K_{X'}+B_{X'})+\phi^{'*} \omega'$. Now we take a smooth compact K\"ahler variety $V$ which dominates both $X$ and $X'$, $X\xleftarrow{\sigma} V\xrightarrow{\sigma'} X'$, such that these maps are compatible with $\Phi$. We define the homaloidal transform $\beta_X$ to be $\sigma_* \sigma^{'*} \beta_{X'}$. Note that $\beta_X$ has local potentials.
    
\end{definition}

In the following lemma we justify our definition of log Sarkisov category (Definition \ref{cat}).
\begin{lemma}{\cite[Lemma $3.6$]{BM97}}
    Let
    \item  \[
 \begin{tikzcd}
 (X,B_X) \arrow[r, dashrightarrow] \arrow{d}&  (X',B_{X'})\arrow{d}\\
S &  S' 
\end{tikzcd}
\] 
 be a bimeromorphic map between two log Mori fiber spaces of strongly $\mbQ$-factorial compact K\"ahler $3$-folds having klt singularities, inducing the log MMP relation between them. Then 
 \begin{enumerate}
     \item there exists a log bimeromorphic category $\mathcal{C}_{\theta}$ containing both log Mori fiber spaces with the function $\theta$ satisfying the condition 
     \begin{enumerate}
         \item $\theta(D_k)=d_k(\leqslant \epsilon)$ and $\theta(D'_{k'})=d'_{k'}(\leqslant \epsilon)$.
         \item $\theta(E)=\epsilon$ if $E$ is exceptional both over $X$ and over $X'$.
     \end{enumerate}
     where $0\leqslant \epsilon<1$.
     \item $\mathcal{C}_{\theta}$ satisfies the following property: for any finite family $\lbrace (X_l,B_{X_l})\rbrace_{l\in L}\subset \mathcal{C}_{\theta}$ there exists an object $(Y,B_Y)\in \mathcal{C}_{\theta}$ such that $(Y,B_Y)$ dominates each $(X_l,B_{X_l})$ by a projective bimeromorphic morphism $(Y,B_Y)\dashrightarrow (X_l,B_{X_l})$ which is a process of $K+B$-MMP over $X_l$. 
 \end{enumerate}
\end{lemma}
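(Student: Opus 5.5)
The plan is to follow the proof of \cite[Lemma 3.6]{BM97}, with the projective ingredients replaced by the K\"ahler MMP for threefolds (\cite{HP15,HP16,DH25}) and by relative projectivity of log resolutions. Since $(X,B_X)$ and $(X',B_{X'})$ are log MMP related, there is a nonsingular compact K\"ahler threefold $W$ with a normal crossing boundary $B_W$ such that both pairs are outputs of $K_W+B_W$-MMPs. We may replace $W$ by a further blow-up, which is projective over $W$, and so assume that $W$ dominates $X$ and $X'$ by morphisms.

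\textbf{Choice of $\epsilon$ and $\theta$.} Each coefficient $d_k$, $d'_{k'}$ is $<1$, and the pairs are klt. Hence we can pick $\epsilon<1$ with $\epsilon\geq \max\{d_k,d'_{k'}\}$ and $\epsilon > -a(E,X,B_X)$, $\epsilon> -a(E,X',B_{X'})$ for every exceptional $E$. This is possible because by klt there are only finitely many divisors with discrepancy $\leq 0$ on a log resolution, so the infimum of the discrepancies is attained and is $>-1$. Define $\theta(D_k)=d_k$ and $\theta(D'_{k'})=d'_{k'}$. For a divisor exceptional over $X$ but not over $X'$, it is a component of $B_{X'}$, so its value is already fixed. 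Symmetrically for divisors exceptional over $X'$ but not over $X$. For divisors exceptional over both, set $\theta=\epsilon$.

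Next we check that both pairs lie in $\mathcal{C}_\theta$. Condition (1) holds by construction. For condition (2), let $E$ be exceptional over $X$. If $E$ is also exceptional over $X'$, then $a(E,X,B_X)>-\epsilon=-\theta(E)$ by the choice of $\epsilon$. If $E$ is a divisor $D'_{k'}$ on $X'$, we need $a(E,X,B_X)>-d'_{k'}$. Here we use the MMP relation: $D'_{k'}$ is the strict transform of a component of $B_W$ that is contracted by the $K_W+B_W$-MMP to $X$. Negativity of that MMP gives $a(E,X,B_X)> a(E,W,B_W)=-\mathrm{mult}_E B_W=-d'_{k'}$. The negativity lemma holds for projective bimeromorphic morphisms of K\"ahler spaces, and each divisorial contraction or flip is one. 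This requires that the coefficient of $D'_{k'}$ in $B_{X'}$ agrees with its coefficient in $B_W$, which is how the MMP outputs are defined. If one only wants $\geq$, one may instead slightly enlarge $\epsilon$; the strict inequality comes from negativity, since $E$ is contracted.

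\textbf{Part (2).} Given finitely many $(X_l,B_{X_l})\in\mathcal{C}_\theta$, take a log resolution $V$ dominating all $X_l$ that is projective over each of them. This is possible in the K\"ahler setting since we can resolve the graph closure by a sequence of blow-ups. Put on $V$ the boundary $B_V=\sum\theta(F)F$ over all prime divisors $F$ of $V$, which is a finite sum on $V$. Then $(V,B_V)$ is klt and snc. Run the $K_V+B_V$-MMP over $X_1$ (relative MMP for projective morphisms of K\"ahler threefolds, \cite{DH25}). Since $K_V+B_V\equiv_{X_1}\sum_E (a(E,X_1,B_{X_1})+\theta(E))E$, with $E$ ranging over the $X_1$-exceptional divisors and all coefficients positive, the MMP contracts exactly these divisors and ends at $X_1$ itself, by the negativity lemma.

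To get a common $Y$, first run the $K_V+B_V$-MMP over $X_1$ but stop by contracting only those divisors $E$ for which $a(E,X_l,B_{X_l})+\theta(E)>0$ is not forced for every $l$. The cleaner route, following BM97, is to take $Y$ to be the terminalization-type model extracting exactly the divisors $E$ with $a(E,X_l,B_{X_l})\leq 0$ for some $l$. These are finitely many, and $Y$ is obtained by running a $K_V+B_V$-MMP over $X_1\times\cdots$, or over each $X_l$ successively. Then $(Y,B_Y)\in\mathcal{C}_\theta$, and the morphism $Y\to X_l$ is again an outcome of a $K+B$-MMP over $X_l$ by the same negativity argument.

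\textbf{Main obstacle.} The key issue is ensuring that the model $Y$ is strongly $\mathbb{Q}$-factorial and that the relative MMP over $X_l$ exists with projective morphisms in the K\"ahler category. I would handle this by noting that the MMP starts from a smooth $V$, where strong $\mathbb{Q}$-factoriality is automatic, and that divisorial contractions and flips preserve strong $\mathbb{Q}$-factoriality by the relative cone theorem.
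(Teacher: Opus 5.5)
Your part (1) is correct and is essentially the paper's argument. Where the paper invokes \cite[Proposition 3.4(c)]{BM97}, you prove the needed inequality $a(E,X,B_X)>-d'_{k'}$ directly from negativity along the MMP $W\dashrightarrow X$. Your strict choice $\epsilon>-a(E,X,B_X)$ is also exactly what membership requires; the paper only writes $\geq-\epsilon$. Drop the opening step where you replace $W$ by a blow-up, though. The blown-up space is no longer known to be the starting point of a $K+B$-MMP ending at $X$ and $X'$, so your negativity argument would not apply to it. You do not need domination anyway: MMP steps never extract divisors, so every prime divisor of $X'$ is the strict transform of one on the original $W$, with the same coefficient. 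The same observation gives well-definedness of $\theta$ when $D_k=D'_{k'}$, which you did not address.

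The genuine gap is in part (2). The statement requires $(Y,B_Y)\in\mathcal{C}_\theta$, i.e.\ $a(F,Y,B_Y)>-\theta(F)$ for every $F$ exceptional over $Y$. You simply assert this for a loosely described ``terminalization-type model''. With your $V$, which is only required to dominate the $X_l$, it need not hold. A divisor $F$ exceptional over $V$ may be a prime divisor of $X$ or $X'$; it is then exceptional over every $X_l$, but $\theta(F)$ is its coefficient in $B_X$ or $B_{X'}$, possibly $0$. Over an snc pair with coefficients $\leq\epsilon$, the only general bound is $a(F,V,B_V)\geq 1-2\epsilon$, attained e.g.\ by blowing up a curve where two components of coefficient $\epsilon$ meet. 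This fails to be $>-\theta(F)$ once $\epsilon\geq\frac{1+\theta(F)}{2}$.

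The missing idea, which is what the paper uses, is to take the log resolution to dominate $X$ and $X'$ as well. Then every $F$ exceptional over $V$ is exceptional over both, so $\theta(F)=\epsilon$. Since $B_V=\sum\theta(D)D$ is snc with all coefficients $\leq\epsilon$, you get $a(F,V,B_V)\geq 1-2\epsilon>-\epsilon$ because $\epsilon<1$. Now $Y:=V$ itself works, and since it is smooth, your worry about strong $\mathbb{Q}$-factoriality disappears. Your first paragraph of part (2) then finishes the proof for every $l$, once you add that the output of the relative MMP is small over $X_l$ and hence isomorphic to it by strong $\mathbb{Q}$-factoriality of $X_l$.

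The subsequent detour should be deleted. The inequality $a(E,X_l,B_{X_l})+\theta(E)>0$ holds for every $l$ by the definition of $\mathcal{C}_\theta$, so your criterion for stopping the MMP selects nothing. Extracting the divisors with $a(E,X_l,B_{X_l})\leq 0$ plays no role in the statement.
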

\begin{proof}
    Before beginning the proof, note that as we have an MMP for a compact K\"ahler $\mbQ$-factorial klt pair $(X,B)$ due to \cite[Theorem $1.1$ and Theorem $1.2$]{DH25}, we automatically have \cite[Proposition $3.4$]{BM97} for free in this case. Now we start the proof.
    \begin{enumerate}
        \item  $(X,B_X)$ and $(X',B_{X'})$ are log MMP related, hence they satisfy \cite[Proposition $3.4$,(c)]{BM97}. Moreover there exists $0\leqslant \epsilon < 1 $ such that $a(D,X,B_X)\geqslant-\epsilon$ and $a(D,X',B_{X'})\geqslant-\epsilon$ for all prime divisors $D$, as $(X,B)$ and $(X',B_{X'})$ are both klt pairs. We set 
        \begin{enumerate}
            \item$\theta(D)=d_k$ or $d'_{k'}$ if $D=D_k $ or $D'_{k'}$.
            \item $\theta(D)=\epsilon$ if $D$ is exceptional over both $X$ and $X'$.
        \end{enumerate}
        When $D_k=D'_{k'}$ (i.e., one is strict transform of the other), then we have that $d_k=d'_{k'}$. Thus $\theta$ is well defined. When $E$ is not exceptional, over either $X$ or $X'$, and not coinciding with any of the $D_k$ and $D'_{k'}$, it is implicit that we set $\theta(E)=0$. It is clear from the construction that the log bimeromorphic category $\mathcal{C}_{\theta}$ contains both $(X,B_{X})$ and $(X',B_{X'})$.
        \item For a finite family $\lbrace (X_l,B_{X_l})\rbrace\subset \mathcal{C}_{\theta}$ we take a common resolution $Y$ which dominates each $X_l$ and both Mori fiber spaces $X$ and $X'$ by projective bimeromorphic morphisms so that the union of exceptional divisors and the strict transforms of $B_X$ and $B_{X'}$ is an SNC divisor. We set $B_{Y}=\sum \theta(D)D$, which by our choice of resolution is an SNC divisor. By the construction of $\theta$, $(Y,B_Y)\in \mathcal{C}_{\theta}$. Now consider the projective bimeromorphic morphism $f_l:(Y,B_Y)\rightarrow (X_l,B_{X_l})$. If we run a relative $(Y,B_Y)$-MMP over $(X_l,B_{X_l})$, it will terminate with a minimal model $(Y',B_{Y'})$ over $(X_l,B_{X_l})$. As $a_{li}>0$ in the equation \[
        K_Y+B_Y=f_l^*(K_{X_l}+B_{X_l})+\sum a_{li}E_{li}
        \], all $E_i$ get contracted by Negativity lemma (see\cite[Lemma 1.3]{Wang21}). Since $X_l$ and $Y$ both are strongly $\mbQ$-factorial and the induced map $Y'\to X_l$ is small, it follows that $Y'\cong X_l$. Hence $f_l$ is a process of $K+B$-MMP over $X_l$ (and thus over $\Specan \mbC$).
    \end{enumerate}
\end{proof}
\begin{remark}\label{rmk: Projective}
    Let $(X,B_X)\in \mathcal{C}_{\theta}$ be a projective pair. Consider any other element $(Y,B_Y)\in \mathcal{C}_{\theta}$. Since $(Y,B_Y)$ and $(X,B_X)$ are bimeromorphic, $Y$ is Moishezon. Since $Y$ is also K\"ahler, it follows that $(Y,B_Y)$ is also a projective pair by \cite[Theorem 1.6]{Nam02}. Therefore every element of $\mathcal{C}_{\theta}$ is projective if at least one element is projective. 
\end{remark}
Now we define the log Sarkisov degree in this case which is analogous to that for projective variety (as in \cite{BM97}).
\begin{definition}{\cite[Definition $3.8$]{BM97}}
    Consider the same setup as in the definition \ref{homaloidal}. The quasi-effective threshold $\mu$ is defined to be a positive real number such that $\mu (K_X+B)+ \beta_X\equiv 0$ over $S$.
    The value of $\mu=\frac{\beta_X\cdot R}{-(K_X+B_X)\cdot R}$\\
    The maximal multiplicity of an extremal ray (denoted by $\lambda$) is defined in the following way: $\frac{1}{\lambda}= sup \lbrace c\in \mbR_{>0}|(K_X+B_X)+c\beta_X$ is $\mathcal{C}_{\theta}$ generalized log canonical pair$\rbrace$.\\
    Or, in other words, for every divisor $F, a(F, X, B_X+c\beta_X)\geq -\theta(F)$.\\
    Moreover, if $\sup$ is $+\infty$, we define $\lambda=0$.
    Note that since $0\leq \epsilon<1$, the pair $(X,B_X+\frac{1}{\lambda}\beta_X)$ is a generalized klt pair.
    Accordingly, the number $e$ of the crepant exceptional divisors is defined to be \[
    e= \lbrace E; a(E,X,B_X+\frac{1}{\lambda} \beta_X )=-\theta (E) \text{ if } \lambda\neq0\rbrace\\
    \]
    Otherwise, $e$ is defined to be $0$.\\
    The triplet $(\mu, \lambda,e)$ is defined to be the log Sarkisov degree.
    
\end{definition}
Let $(Y, B_Y)\in \mathcal{C}_\theta$. Let $f: W\to Y$ be a log resolution such that $\beta_W$ is nef. 
\[ K_W= f^*(K_Y+B_Y)+ \sum a(E_i, Y, B_Y)E_i\]
\[ \beta_W+\sum m_{E}E = f^*\beta_Y\]
Observe that $m_E\geq 0$ by negativity lemma as $\beta_W$ is nef.
\[K_W+t\beta_W=f^*(K_Y+B_Y+t\beta_X)+\sum(a(E_i,Y, B_Y)E_i-tm_{E_i}E_i)\]
Thus we have 
\[ a(E_i, Y, B_Y+t\beta_Y)= a(E_i,Y,B_Y)-tm_{E_i}\]
Note that for any component of $B_Y$, the discrepancy and generalized discrepancy remain the same. The generalized discrepancy changes only for exceptional divisors. Moreover, $\frac{1}{\lambda_i}=min_{m_E>0} \frac{a(E,Y,B_Y)+\theta(E)}{m_E}$.
Furthermore, we define the local generalized $\mathcal{C}_{\theta}$-threshold at $x$, $c_\theta(x,Y)=min_{m_E>0, x\in f(E)} \frac{a(E,Y,B_Y)+\theta(E)}{m_E}$.
\begin{remark}\label{rmk:local threshold theta}
Observe that for any fixed generalized pair $(Y, B_Y+\beta_Y)$ where $(Y, B_Y)\in \mathcal{C}_\theta$, local generalized $c_\theta$ threshold can take only finitely many values.
\end{remark}

So as we can see the log Sarkisov degree is highly dependent on the category $\mathcal{C}_{\theta}$.\\
We closely follow the techniques from \cite[Theorem $3.7$]{BM97}. The proof has two parts. First, we decompose the map $\Phi$ into Sarkisov links. The second step is the termination of the links. The proof of both of these steps is dependent on the study of the cases:
\begin{enumerate}
    \item $\lambda\leqslant \mu$ 
    \item $\lambda > \mu$
\end{enumerate}
 Obviously there are several technical challenges which we need to overcome here. We slightly modify our approach and explain it here. Since the log Sarkisov program is already known for projective threefolds, by Remark \ref{rmk: Projective}, we may assume that both $(X,B_X)$ and $(X',B_X')$ are non-projective compact K\"ahler klt pairs of dimension $3$. Note that if $\dim S\leq 1$, then $S$ is projective. This implies that $X$ is projective. Thus we may assume that $\dim S=\dim S'=2$.\\
Let $\Phi: X \dashrightarrow X'$ be the bimeromorphic map. We will decompose $\Phi$ into standard Sarkisov links.\\

Before we get into the proof, we will need a very important lemma to study the first case.

\begin{lemma}[Noether-Fano Criterion]\label{NF}{\cite[Theorem $4.2$]{CO95}}
The bimeromorphic map $\Phi$ in \ref{SP} is an isomorphism of Mori fiber spaces if $\lambda\leqslant \mu$ and $K_X+B_X+\frac{1}{\mu} \beta_X$ is nef.
\end{lemma}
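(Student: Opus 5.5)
We follow the classical Noether–Fano argument (as in \cite{CO95} and \cite[Proposition 3.9]{BM97}), adapted to generalized pairs whose moduli part is the b-current $\beta$. Fix a smooth compact K\"ahler $V$ with $X\xleftarrow{\sigma} V\xrightarrow{\sigma'} X'$ resolving $\Phi$, chosen so that $\beta_V=\sigma'^*\beta_{X'}$ is nef and the relevant divisors are SNC. By definition of the homaloidal transform and of the discrepancies we can write
\begin{align*}
K_V+B_V+\tfrac{1}{\mu}\beta_V &= \sigma^*\bigl(K_X+B_X+\tfrac{1}{\mu}\beta_X\bigr)+\sum_E\bigl(a(E,X,B_X)-\tfrac{1}{\mu}m_E+\theta(E)\bigr)E,\\
K_V+B_V+\tfrac{1}{\mu}\beta_V &= \sigma'^*\bigl(K_{X'}+B_{X'}+\tfrac{1}{\mu'}\beta_{X'}\bigr)+\bigl(\tfrac{1}{\mu}-\tfrac{1}{\mu'}\bigr)\beta_V+\sum_{E'}\bigl(a(E',X',B_{X'})+\theta(E')\bigr)E',
\end{align*}
where $B_V=\sum\theta(D)D$. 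On $X'$ we have $K_{X'}+B_{X'}+\frac{1}{\mu'}\beta_{X'}=\frac{1}{\mu'}\phi'^*\omega'$.

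\textbf{Step 1: $\mu\geq\mu'$.} Take a general curve $C'$ in a general fibre of $X'\to S'$, together with its strict transform $C$ on $X$. Then $C$ avoids the indeterminacy loci, which have codimension at least $2$. Because $\sigma'^*\beta_{X'}$ is K\"ahler on $C'$, we get $\beta_X\cdot C>0$. Intersect both displays with the lift of $C'$. This uses the nefness of $K_X+B_X+\frac1\mu\beta_X$ and the fact that the coefficients $a(E,X,B_X)-\frac1\mu m_E+\theta(E)$ are $\geq 0$. That nonnegativity is exactly $\lambda\le\mu$, i.e. $(X,B_X+\frac1\mu\beta_X)$ is $\mathcal C_\theta$-generalized lc. The comparison gives $\frac1\mu\le\frac1{\mu'}$. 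The second display also contributes $\sum(a(E',X',B_{X'})+\theta(E'))E'$, with coefficients $>0$ for $\sigma'$-exceptional divisors by the category condition (2).

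\textbf{Step 2: equality and exceptional divisors.} Apply the negativity lemma \cite[Lemma 1.3]{Wang21} to the difference of the two expressions, pushed forward to $X'$ over $S'$. The difference is
\[
\sigma^*N-\sigma'^*\tfrac{1}{\mu'}\phi'^*\omega' + (\text{effective }\sigma'\text{-exceptional}) - (\text{effective }\sigma\text{-exceptional}),
\]
with $N$ nef. Run this symmetrically, as in \cite{BM97}, to show that every $\sigma'$-exceptional divisor that is not $\sigma$-exceptional has coefficient zero. That is impossible by klt-ness and condition (2). Hence $\Phi^{-1}$ contracts no divisor. Conversely, $\lambda\le\mu$ forces $\Phi$ not to contract divisors either, using $\mathcal C_\theta$-canonicity and the strict inequality $a>-\theta$ in Definition \ref{cat}. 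Therefore $\Phi$ is small.

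\textbf{Step 3: conclusion.} Since $\Phi$ is small and both $X,X'$ are strongly $\mbQ$-factorial, classes transfer between $N^1(X)$ and $N^1(X')$. Now $\beta_X=\Phi^{-1}_*\beta_{X'}$ and $\mu=\mu'$, so $K_X+B_X+\frac1\mu\beta_X$ corresponds to $\frac1{\mu'}\phi'^*\omega'$. This class is nef on $X$ and is the pullback of a K\"ahler class from $S'$. Hence the fibration $X\to S$, which is the contraction of the ray $R$ on which this class is trivial, agrees with the map to $S'$, giving $S\cong S'$ compatibly. Finally, $-(K_X+B_X)$ is relatively K\"ahler over $S$ and the relative Picard numbers are $1$. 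A small bimeromorphic map over $S$ between relative Mori fibre spaces that preserves the relatively ample (K\"ahler) class is an isomorphism. We would check this via the relative section-ring/$\Specan$ argument, or more simply since the graph is finite over both sides.

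\textbf{Main obstacle.} The hardest part will be Step 2, where the negativity lemma must be applied to classes containing the current $\beta$ rather than divisors. It needs care with Bott–Chern classes on the singular analytic spaces, and we would use the strongly $\mbQ$-factorial lemma above to push classes forward. The final isomorphism statement in the K\"ahler (non-projective) relative setting, replacing relative Proj, is the secondary difficulty.
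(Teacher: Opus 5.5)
Your architecture is the paper's: two expressions for $K_V+B_V+\frac{1}{\mu}\beta_V$ on a common resolution, the negativity lemma, then a conclusion. Two steps, however, fail as written.

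\textbf{First gap: $\mu=\mu'$ is never proved.} Intersect your two displays with the lift $\tilde C$ of a general fibre $C'$ of $X'\to S'$. The first display gives $\geq 0$ by nefness and $\mathcal{C}_\theta$-glc. In the second, $\phi'^*\omega'\cdot C'=0$, the $\sigma'$-exceptional divisors miss $\tilde C$, and the non-exceptional ones have coefficient $0$. What remains is $(\frac{1}{\mu}-\frac{1}{\mu'})\,\beta_{X'}\cdot C'\geq 0$, i.e. $\mu\leq\mu'$, the opposite of what you state.

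The inequality $\mu\geq\mu'$ needs the mirror computation on a general fibre $C$ of $X\to S$. There $K_V+B_V+\frac{1}{\mu'}\beta_V=\sigma'^*(\frac{1}{\mu'}\phi'^*\omega')+\sum(a(E',X',B_{X'})+\theta(E'))E'$ is $\geq 0$ on $\tilde C$. The $X$-side gives $(\frac{1}{\mu'}-\frac{1}{\mu})\,\beta_X\cdot C$, with $\beta_X\cdot C>0$. Without both halves you cannot drop the term $(\frac{1}{\mu}-\frac{1}{\mu'})\beta_V$ from the difference in Step 2, nor use $\mu=\mu'$ in Step 3.

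The paper gets $\mu=\mu'$ directly: after reducing to the non-projective case with $\dim S=\dim S'=2$, general fibres of the two fibrations correspond. Your two-sided curve argument, once completed, is a valid alternative that does not need this reduction.

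\textbf{Second gap: only half of the smallness claim follows from discrepancies.} Your labels are also swapped: a $\sigma'$-exceptional, non-$\sigma$-exceptional divisor is a divisor of $X$ contracted by $\Phi$, not by $\Phi^{-1}$. That half is fine. Such a divisor has coefficient $>0$ on the $X'$-side by condition (2) of Definition~\ref{cat}, and $0$ on the $X$-side.

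The converse does not follow. A divisor $E$ of $X'$ that is exceptional over $X$ has coefficient $0$ on the $X'$-side. Equality of the effective parts then only forces $a(E,X,B_X)-\frac{1}{\mu}m_E+\theta(E)=0$, i.e. $E$ is $\mathcal{C}_\theta$-crepant for $(X,B_X+\frac{1}{\mu}\beta_X)$. This is exactly what $\lambda=\mu$ permits. It also does not contradict $a(E,X,B_X)>-\theta(E)$, since $m_E$ may be positive.

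The missing ingredient is the paper's Picard-number count:
\begin{enumerate}
\item From $\sigma^*(K_X+B_X+\frac{1}{\mu}\beta_X)=\sigma'^*(\frac{1}{\mu}\phi'^*\omega')$, the rigidity lemma gives a morphism $h\colon S\to S'$, so $\rho(X')\leq\rho(X)$.
\item Combine this with $\rho(V)=\rho(X)+e_\sigma=\rho(X')+e_{\sigma'}$ and with $e_{\sigma'}\leq e_\sigma$ from the good half.
\item This yields $\rho(X)=\rho(X')$ and $e_\sigma=e_{\sigma'}$, so $\Phi$ is small.
\item It also yields $\rho(S)=\rho(S')$, so $h$ is an isomorphism, and \cite[Lemma 2.16]{DHY26} concludes.
\end{enumerate}
Your Step 3 uses the ``pullback from $S'$'' observation only after assuming smallness. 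It has to come first, to produce $h$ and run this count.
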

\begin{proof}
    A similar proof to that of \cite[Theorem $4.2$]{CO95} works here since a K\"ahler form shows same intersection theoretic behaviour as an ample divisor, replace $K$ by $K+B$ and the ample divisor by K\"ahler form. For the reader's convenience, we include the proof here.
 Let $W$ be a common resolution that resolves the bimeromorphic map $\Phi:X \dashrightarrow X'$. Since $\dim S=\dim S'=2$, the general fibers are isomorphic and hence $\mu_X=\mu_{X'}=\mu$. Write 
  \[ K_W+\frac{1}{\mu}\beta_Z= q^*(K_{X'}+B_{X'}+\frac{1}{\mu}\beta_{X'})+\sum a(E_i,X',B_{X'}+\frac{1}{\mu}\beta_{X'})E_i\]
Since $\beta_{X'}$ descends on $X'$, $a(E_i,X',B_{X'})=a(E_i,X',B_{X'}+\frac{1}{\mu}\beta_{X'})$ for each $i$.
\[K_W+\sum \theta(F)F +\frac{1}{\mu}\beta_Z=q^*(K_{X'}+B_{X'}+\frac{1}{\mu}\beta_{X'})+E_q\]
where $F\subset W$ runs over all prime divisors. Since $\theta$ takes nonzero value only on finitely many divisors, this is a finite sum.
By definition of $\theta$, we know that $a(D,X',B_{X'})=-\theta(D)$ for any boundary prime divisor and $a(E,X',B_{X'})> -\theta(E)$. Thus it follows that $E_q$ is effective and its support is the union of all $q$-exceptional divisors.
Write 
\[K_W+\sum \theta(F)F +\frac{1}{\mu}\beta_Z=p^*(K_{X}+B_{X}+\frac{1}{\mu}\beta_{X})+E_p\]
Since $(X,B_X+\frac{1}{\mu}\beta_X)\in \mathcal{C}_{\theta}$, $E_p\geq 0$.
Now $H=E_p- E_q=q^*(K_{X'}+B_{X'}+\frac{1}{\mu}\beta_{X'})- p^*(K_{X}+B_{X}+\frac{1}{\mu}\beta_{X})$.
By the Negativity lemma \cite[Lemma 1.3]{Wang21}, $H=0$. Hence $p^*(K_X+B_X+\frac{1}{\mu}\beta_X)=q^*(K_{X'}+B_{X'}+\frac{1}{\mu}\beta_{X'})$. Let $(\phi'\circ q): W\to S'$ be the composition. Clearly  $q^*(K_{X'}+B_{X'}+\frac{1}{\mu}\beta_{X'})=(\phi'\circ q)^*\omega'=(\phi\circ p)^*(K_S+B_S+\frac{1}{\mu}\beta_S)$. Both $\phi\circ p$ and $\phi'\circ q$ are projective morphisms. Let $C$ be any curve contracted by $\phi\circ p$. Then $\omega'\cdot (\phi'\circ q)_*C =0$ and hence, $\phi'\circ q$ contracts $C$. Thus by the rigidity lemma we have a morphism $h: S \to S'$. 
Note that $\rho(S') \leq \rho(S)$ and hence $\rho(X')\leq \rho(X)$.
Moreover, we have $\rho(W)=\rho(X)+e_p=\rho(X')+e_q$ where $e_p$ and $e_q$ denote the numbers of $p$ and $q$ exceptional divisors respectively. Let $M$ be any $q$-exceptional divisor.  $E_p=E_q$ implies that $M\in E_p$.  The coefficient of $M$ in $E_p=a(M, X,B_X+\frac{1}{\mu}\beta_X)+\theta(M)>0$. If $M$ is not exceptional then it must be strict transform of some boundary component $N$ on $X$. But then $a(M, X, B_X)>-\theta(M)$ which is a contradiction. Thus $M$ must be $p$-exceptional and hence, $\rho(X')\geq \rho(X)$. Therefore $\rho(X)=\rho(X')$ and $\rho(S)=\rho(S')$.
Thus it follows that $h$ is an isomorphism. By \cite[Lemma 2.16]{DHY26}, $\Phi$ is also an isomorphism.

\end{proof}

\textbf{Decomposition of $\Phi$ into standard Sarkisov links:}\\
Let $(\mu,\lambda,\nu)$ be the Sarkisov triplet corresponding to $\phi:X\rightarrow S$.
\begin{enumerate}
    \item Suppose we are in the case $\lambda\leqslant \mu$. 
First we ask whether $K_X+B_X+\frac{1}{\mu} \beta_X$ is nef. If yes, then by Lemma \ref{NF}, we have that $\Phi$ is an isomorphism of Mori fiber spaces.\\
If $K_X+B_X+\frac{1}{\mu} \beta_{X}$ is not nef, we construct a normal compact K\"ahler variety $T$ dominated by $S\rightarrow T$ such that $K_X+B_X+\frac{1}{\mu} \beta_X$ is not relatively nef over $T$. Let $R=\mbR^{\geq0}[C]$ be the $K_X+B_X$-negative extrmal ray giving the Mori fiber space $\phi:X\rightarrow S$. By the cone theorem we know that the extremal rays do not accumulate around $R$. As $(K_X+B_X+\frac{1}{\mu} \beta_X)\cdot R=0$, we can choose $F=P+R$, a 2 dimensional extrmeal face, where $P$ is a $K_X+B_X+\frac{1}{\mu}\beta_X$-negative extremal ray. $F$ is $K_X+B_X+(\frac{1}{\mu}-\epsilon)\beta_X$-negative for sufficiently small $\epsilon$.\\
\begin{claim}\label{clm:cont}
    There exists a compact K\"ahler variety $T$ together with a projective morphism with connected fibers $cont_F:X \to T$ which contracts the extremal face $F$ defined above.
\end{claim}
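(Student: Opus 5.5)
The plan is to realise $F$ as the null locus of a nef class that is an adjoint class for a klt pair, and then apply the contraction theorem for Kähler threefold pairs from \cite[Theorem $1.1$ and Theorem $1.2$]{DH25}. That theorem covers pairs $(X,\Delta)$ with $\Delta$ an honest divisor. Our face $F$ is, however, cut out by $K_X+B_X+\frac{1}{\mu}\beta_X$, where $\beta_X$ is only a current. The first step is therefore to turn $\beta_X$ into a boundary divisor.

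Since $X$ is non-projective and strongly $\mbQ$-factorial, I would not try to pick an effective divisor in the class of $\beta_X$. Instead I would work purely numerically. Choose a nef class $\gamma\in H^{1,1}_{BC}(X)$ with $\gamma^{\perp}\cap\NA(X)=F$. This is possible because $F$ is an extremal face of a cone whose $K_X+B_X$-negative part is locally polyhedral: the rays $R$ and $P$ are isolated and $F$ is $K_X+B_X+(\frac{1}{\mu}-\epsilon)\beta_X$-negative.

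Concretely, take a Kähler class $\omega$ and set $\gamma=K_X+B_X+(\frac{1}{\mu}-\epsilon)\beta_X+\delta\omega$, perturbed as follows.
\begin{enumerate}
\item Write $(K_X+B_X)+(\frac{1}{\mu}-\epsilon)\beta_X$ as $K_X+B_X+\eta$, where $\eta$ is a modified Kähler or big class. Such a decomposition exists because $\beta_X$ is the pushforward of the Kähler form $\sigma'^*\beta_{X'}$, corrected by a small multiple of $\omega$.
\item Use the cone theorem for $(X,B_X)$ in the form $\NA(X)=\NA(X)_{K_X+B_X+\eta\geq 0}+\sum R_i$, with finitely many $K_X+B_X+\eta$-negative rays. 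Only finitely many rays are relevant after adding the Kähler part.
\item Then build a supporting class $\gamma$ vanishing exactly on $F$, in the manner of the usual proof of the contraction theorem for faces.
\end{enumerate}

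With $\gamma$ in hand, $\gamma-(K_X+B_X)$ is Kähler on the face $F$ and positive on the rest of $\NA(X)$. The relative base point free / contraction theorem for klt Kähler threefolds (\cite{DH25}, together with \cite{HP16} for faces) then yields a proper morphism $cont_F:X\to T$ onto a normal compact Kähler space with connected fibres. This morphism contracts exactly the curves with classes in $F$.

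Projectivity of $cont_F$ follows because $-(K_X+B_X)$ is relatively ample over $T$: it is positive on $F$, and $F$ is the relative cone $\NA(X/T)$, so the relative Kleiman criterion applies. Since $R\subset F$, the morphism factors through $\phi:X\to S$ by rigidity, which gives $S\to T$ as required.

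I expect the main obstacle to be the contraction of a two-dimensional face, rather than a single ray, in the non-projective Kähler setting. In particular, it must be shown that $\gamma$ can be chosen as a genuine nef class of the form $K_X+B_X+(\text{Kähler})$ rather than just a current class. If \cite{DH25} only contracts rays, my fallback is to first contract $P$ relative to $S$, or to contract $R$ to $S$ and then work on $S$. Here $S$ is a normal Kähler surface, and $P$ maps to a $K_X+B_X$-negative-like ray on $S$, giving $S\to T$ by contraction of a curve class on a surface via Grauert's criterion; composing yields $cont_F$.
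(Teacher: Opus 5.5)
Your primary route has a genuine gap: it assumes that the face $F$ is $(K_X+B_X)$-negative. All you know is that $P$ is $(K_X+B_X+\frac1\mu\beta_X)$-negative, and that $F$ is $(K_X+B_X+(\frac1\mu-\epsilon)\beta_X)$-negative. The class $\beta_X$ is only the homaloidal transform of a K\"ahler form. It is a modified K\"ahler (big) class, not a nef one. So $\beta_X$ can be negative on curves in its non-K\"ahler locus, and nothing prevents $P$ from being spanned by such a curve. In that case $(K_X+B_X)\cdot P\geq 0$ is possible, and two of your steps fail:
\begin{itemize}
\item No nef $\gamma$ with $\gamma^\perp\cap\NA(X)=F$ can have $\gamma-(K_X+B_X)$ K\"ahler, since that would force $(K_X+B_X)\cdot P<0$.
\item Your projectivity argument, via relative ampleness of $-(K_X+B_X)$ over $T$, breaks down.
\end{itemize}
The twist you actually have is $(\frac1\mu-\epsilon)\beta_X$, which is merely big. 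You would therefore need a contraction theorem for a two-dimensional extremal face of a non-projective K\"ahler threefold with a generalized (current) boundary. That is not what \cite{DH25} or \cite{HP16} supply, and your proposal leaves this point unresolved; you yourself identify it as the main obstacle. (Also, $\sigma'^*\beta_{X'}$ is only semipositive and big on $V$, not K\"ahler.)

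Your fallback, contracting on $S$, is the right idea and is exactly the paper's route, but as written it omits the steps that make it work.
\begin{itemize}
\item \emph{Negativity on $S$.} ``Negative-like'' must be made precise. The paper uses the canonical bundle formula \cite{HP24} to write $K_X+B_X+\frac1\mu\beta_X=\phi^*(K_S+B_S+\beta_S)$ with $(S,B_S+\beta_S)$ generalized klt. Writing $P=\mbR^{\geq0}[\Gamma]$, this gives $(K_S+B_S+\beta_S)\cdot\phi_*\Gamma<0$.
\item \emph{Extremality of $\phi_*P$.} You must show that $\phi_*P$ is an extremal ray of $\NA(S)$. Use $\phi_*\NA(X)=\NA(S)$ and $\ker\phi_*=\mbR R$. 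If $a+b\in\phi_*P$ with $a,b\in\NA(S)$, lift to $a',b'\in\NA(X)$. Then $a'+b'$ differs from some $c\in P$ by a multiple of the class spanning $R$. Extremality of $F$ forces $a',b'\in F$, hence $a,b\in\phi_*P$.
\item \emph{Contraction and projectivity.} One then contracts this ray on the surface to get a projective $g:S\to T$ with connected fibres. Then $cont_F=g\circ\phi$ is projective as a composite of projective morphisms over a compact base, so no relative Kleiman argument is needed.
\end{itemize}
If you want to use Grauert's criterion instead of the contraction theorem on $S$, you must also prove $(\phi_*\Gamma)^2<0$. You have not argued this; it requires excluding self-intersection $\geq 0$ using that $S$ (equivalently $X$) is not projective.
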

\begin{proof}[Proof of \ref{clm:cont}]
    Since $\phi:X \to S$ is a contraction of a negative extremal ray and $K_X+B_X+\frac{1}{\mu}\beta_X\equiv_S 0$, $K_X+B_X+\frac{1}{\mu}\beta_X=\phi^*\alpha$ for some $\alpha \in H^{1,1}_{BC}(S)$. By the cone theorem \cite[Theorem 1.6]{DHY26}, we know that $P=\mbR^+[\Gamma]$. By the canonical bundle formula \cite[Theorem 0.3]{HP24}, $K_X+B_X+\frac{1}{\mu}\beta_X =\phi^*(K_S+B_S+\beta_S)$  where $(S,B_S+\beta_S)$ is a generalized klt pair. Now $(K_X+B_X+\frac{1}{\mu}\beta_X)\cdot \Gamma =(K_S+B_S+\beta_S)\cdot \phi_*\Gamma< 0$.
Note that $\phi_*P=\mbR^{\geq0}[\phi_*\Gamma]$. 
\begin{claim} \label{clm:extremal}
    $\phi_*P$ is also an extremal ray in $\NA(S)$.
\end{claim}
\begin{proof}[Proof of claim \ref{clm:extremal}]
    We know that $\phi_*\NA(X)=\NA(S)$ and $\Nef(S)=(\NA(S))^*$. Moreover, we also have $\ker(\phi_* :N_1(X) \to N_1(S))=R$. 
Let $a,b \in \NA(S)$ such that $a+b \in \phi_*P$. Write $a+b=\phi_*c$ where $c\in P$. Then
$\phi_*a'+\phi_*b'=\phi_*c \implies \phi_*(a'+b'-c)=0\implies a'+b'-c =\lambda [C]$ where $\lambda\geq 0$.\\
Now $a'+b'\in c+\lambda [C]\implies a'+b'\in F$. Since $F$ is extremal, $a',b'\in F\implies a,b\in \phi_*P$. Hence our claim follows.
\end{proof}
    Thus by the contraction theorem we can contract $\phi_*\Gamma$ and obtain a projective morphism with connected fibers $g:S \to T$. Thus it follows that $g\circ \phi:X \to T$ is a projective morphism which contracts $F$. Moreover, $\dim T=2$ (otherwise, it would imply that $X$ is projective) which implies that $g:S \to T$ is bimeromorphic.
\end{proof}
Now, since $g:S \to T$ is bimeromorphic, the general fibers of $g\circ \phi$ and $\phi$ are isomorphic. Again, from the construction, we know that $(K_X+B_X+\frac{1}{\mu}\beta_X)\cdot X_s=0$ for any general fiber $X_s$ of $\phi$. Thus $K_X+B_X+\frac{1}{\mu}\beta_X$ is pseudo- effective over $T$. 
Note that $\beta_X$ is big on $X$. By \cite[Theorem 1.1]{HX26}, we may run $K_X+B_X+\frac{1}{\mu}\beta_X$-MMP over $T$ and this terminates with a minimal model over $T$.\\

Then we ask whether the first non-flipping contraction is divisorial. If the answer is yes, then the $K+B+\frac{1}{\mu}\beta$-MMP consists of a sequence of $K+B+\frac{1}{\mu}\beta$-flips $X\dashrightarrow Z'$ followed by a $K+B+\frac{1}{\mu}\beta$-negative divisorial contraction $Z'\rightarrow X_1$. $\rho(X_1/T)=1$. Since the image of exceptional locus has codimension at least $2$, it does not dominate the base $S_1$. Thus, for a general fiber $X_{1t}$, $(K_{X_1}+B_{X_1}+\frac{1}{\mu}\beta_{X_1})\cdot X_{1t}=0$. Since $\beta_{X_1}$ is big, $\beta_{X_1}\cdot X_{1t} >0$ and hence $(K_{X_1}+B_{X_1})\cdot X_{1t}<0$. Thus it follows that $-(K_{X_1}+B_{X_1})$ is relatively ample over $S_1=T$. So we have a link of type III.
\[
\begin{tikzcd}
X \arrow[r, dashrightarrow] \arrow{d} & Z' \arrow{rd} \\
S \arrow{rd} & & X_1 \arrow{d}\\
& T \arrow[r,"\cong"] & S_1
\end{tikzcd}
\]
\begin{claim}\label{clm:div-con}
    The above $K_X+B_X+\frac{1}{\mu}\beta_X$-MMP over $T$ always contracts a divisor.
\end{claim}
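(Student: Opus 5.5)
We argue by contradiction: suppose the $K_X+B_X+\frac{1}{\mu}\beta_X$-MMP over $T$ consists only of flips $X=X^0\dashrightarrow X^1\dashrightarrow\cdots\dashrightarrow X^k$ and terminates with a minimal model $X^k$ over $T$. Flips are isomorphisms in codimension one, so $\rho(X^k/T)=\rho(X/T)=2$, and every $X^i$ is strongly $\mathbb{Q}$-factorial (flips preserve this, as in the MMP of \cite{DH25}). The aim is to show that such an $X^k$ cannot exist. We will compare the two fibration structures over $T$: the original one $X\to S\to T$, and the one that would have to exist on $X^k$.

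First, $K_{X^k}+B_{X^k}+\frac{1}{\mu}\beta_{X^k}$ is nef over $T$. It is numerically trivial on the general fibre of $X^k\to T$, because that fibre coincides with a general fibre of $\phi$: $g$ is bimeromorphic and the flipping loci lie over a proper closed subset of $T$. Over the general point of $T$, the class $K_X+B_X+\frac{1}{\mu}\beta_X$ is relatively trivial. Hence its relative numerical dimension over $T$ is zero generically, so over the exceptional curve of $g$ it should be nef but not relatively trivial. Next, since $\rho(X^k/T)=2$ and $\NA(X^k/T)$ is a two-dimensional cone, we identify its two rays. The strict transform of the fibre class $[C]$ of $\phi$ spans one ray $R'$ with $(K+B+\frac{1}{\mu}\beta)\cdot R'=0$ and $(K+B)\cdot R'<0$. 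Then $R'$ is a $K_{X^k}+B_{X^k}$-negative extremal ray. Contracting it by the cone and contraction theorems gives $X^k\to S^k$, a Mori fibre space of relative dimension one, with $S^k\to T$ bimeromorphic and $\rho(S^k/T)=1$.

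Now compare $S$ and $S^k$. Both are surfaces with Picard number one over $T$, both are bimeromorphic to $T$, and they carry isomorphic general fibrations. Since $S\to T$ contracts the curve $\phi_*\Gamma$ and $S^k\to T$ has $\rho=1$, the induced map $S\dashrightarrow S^k$ over $T$ is a bimeromorphic map of surfaces that is an isomorphism in codimension one. For normal $\mathbb{Q}$-factorial surfaces this map is an isomorphism: there are no small maps in dimension two, so the rigidity argument of Lemma \ref{NF} applies. This forces the flipping sequence to be an isomorphism over $S$ in codimension one. The class $K+B+\frac{1}{\mu}\beta$ is then the pullback of $K_S+B_S+\beta_S$ on both sides. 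This pullback is negative on $\phi_*\Gamma$ by Claim \ref{clm:cont}, yet it is nef over $T$ on $X^k$. Pulling the exceptional curve of $S^k\cong S$ back to a curve in $X^k$ gives the contradiction. Therefore some step of the MMP must be divisorial, or else a Mori fibre contraction over $T$.

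It remains to exclude a Mori fibre contraction over $T$ as the first non-flip step. Such a contraction would have image of dimension at most one or would fail to be bimeromorphic onto $T$. Since $\dim T=2$ and $X$ is non-projective, a base of dimension at most one is ruled out, as in the proof of Claim \ref{clm:cont}. The remaining possibility is a fibration onto a surface over $T$, which again yields the contradiction above.

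The main obstacle is making the comparison $S\dashrightarrow S^k$ rigorous in the Kähler category. We must show the map is an isomorphism in codimension one and then an isomorphism, which requires pushforward of the relevant $(1,1)$-classes under $\phi$. We expect to handle this using strong $\mathbb{Q}$-factoriality and \cite[Lemma 2.16]{DHY26}.
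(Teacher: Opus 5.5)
There is a genuine gap, and it sits at the steps you yourself flag.

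First, the extremality of $R'$ in $\NA(X^k/T)$ depends on $K_{X^k}+B_{X^k}+\frac{1}{\mu}\beta_{X^k}$ not being numerically trivial over $T$. You only assert that it ``should be'' nef but not relatively trivial.

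Second, and more seriously, you infer that $S\dashrightarrow S^k$ is an isomorphism in codimension one because $S\to T$ contracts one curve and $\rho(S^k/T)=1$. That inference is not valid. Two extractions of relative Picard number one over the same point of $T$ need not extract the same divisor; weighted blow-ups with different weights are an example. What would actually make the comparison work is the following. Let $C_S\subset S$ be the curve contracted by $g$. The prime divisor $\phi^{-1}(C_S)$ survives the flips, and it maps onto a curve of $S^k$ because the fibres of $\phi^k$ are one-dimensional. Hence the exceptional curves of $S\to T$ and $S^k\to T$ induce the same valuation. None of this is in your proposal. Without it, the identification $S^k\cong S$ is not established, and neither is the final contradiction.

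The missing idea, which makes the whole comparison of bases unnecessary, is the one the paper uses. Since $C_S^2<0$ and $(K_S+B_S+\beta_S)\cdot C_S<0$, one can write $K_S+B_S+\beta_S=g^*(K_T+B_T+\beta_T)+aC_S$ with $a>0$. Hence $K_X+B_X+\frac{1}{\mu}\beta_X\equiv_T G:=a\phi^*C_S$, an effective divisor whose image in $T$ is a point.

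The MMP over $T$ is therefore a $G$-MMP. On its output $X_1$, the strict transform $G_{X_1}$ is effective, lies over a point of $T$, and is nef over $T$. So $G_{X_1}=0$ by the negativity lemma \cite[Lemma 2.8]{DH24}, which means the MMP must contract a divisor.

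This is a two-line argument. It needs no second Mori fibre structure on the output and no comparison of $S$ with $S^k$. It also makes your last paragraph on Mori fibre contractions superfluous: these cannot occur, because the class is pseudo-effective over $T$ and the MMP ends with a minimal model over $T$.
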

\begin{proof}[Proof of claim \ref{clm:div-con}]
    Let $C$ be the curve contracted by the bimeromorphic morphism $g:S \to T$. $(K_S+B_S+\beta_S)=g^*(K_T+B_T+\beta_T)+ aC$ where $a=\frac{(K_S+B_S+\beta_S)\cdot C}{C^2}>0$. Thus we have 
    \[K_X+B_X+\frac{1}{\mu}\beta_X=\phi^*g^*(K_T+B_T+\beta_T)+a\phi^*C\]
    \[ K_X+B_X+\frac{1}{\mu}\beta_X\equiv_T G\] where $G=a\phi^*C>0$.
    Let $G_{X_1}$ be the strict transform of $G$ on the output $X_1$. Then $G_{X_1}$ is effective, exceptional over $T$ and nef over $T$. It follows from \cite[Lemma 2.8]{DH24} that $G_{X_1}=0$. In particular, the divisor $G$ is contracted by the sequence of MMP steps.
\end{proof}
Note that the above argument uses the fact that the base of the Mori fiber space has dimension $2$ and that's why a link of type IV does not occur in this case.
  One more very important observation is that in this case the quasi-effective threshold remains the same. Since MMP preserves the singularities, $(X,B_X+\frac{1}{\mu}\beta_X)$ is $\mathcal{C}_{\theta}$-generalized log canonical and so is $(X_1, B_{X_1}+\frac{1}{\mu}\beta_{X_1})$. Thus $\lambda_1\leq \mu=\mu_1$. Therefore untwisting by link of type III, we stay in the case $\lambda\leq \mu$.\\

\[
\]
    \item Now we move on to the case of $\lambda>\mu$. In this case we construct a maximal divisorial blowup $p:Z\rightarrow X$ with respect to $K_X+B_X+\frac{1}{\lambda}\beta_X$, that is, $\rho(Z/X)=1$, the exceptional locus of $p$ is a prime divisor $E$, and  $p$ is $K_X+B_X+\frac{1}{\lambda} \beta_X$-crepant. \\
    
Let $I$ be the finite set containing all crepant exceptional divisors. Recall that $a(E, X,B_X+\frac{1}{\lambda}\beta_X)=-\theta(E)\leq 0$. Then by \cite[Proposition 2.28]{HX26}, we can construct $\eta: \bar{X} \to X$ such that $\bar{X}$ is strongly $\mbQ$-factorial, $\eta$ is projective and the exceptional divisors are exactly the divisors contained in $I$. The following equation is satisfied:

\[K_{\bar{X}}+B_{\bar{X}}+\frac{1}{\lambda}\beta_{\bar{X}}=\eta^*(K_X+B_X+\frac{1}{\lambda}\beta_X).\]
Where $B_{\bar{X}}=\eta_*^{-1}B_{X}+\sum\theta(E)E$. Ignoring the moduli part $\beta$ we write:
    \[ K_{\bar{X}}+\eta_*^{-1}B_X+ \sum_{E\in I}\theta(E)E=\eta^*(K_X+B_X)+\sum_{E \in I}(a(E,X,B_X)+\theta(E))E\]
    \[K_{\bar{X}}+B_{\bar{X}}=\eta^*(K_X+B_X)+ \sum_{E \in I}(a(E,X,B_X)+\theta(E))E\]

Since $K_{\bar{X}}+B_{\bar{X}}+\frac{1}{\lambda}\beta_{\bar{X}}$ is gklt, $(\bar{X}, B_{\bar{X}})$ is a klt pair. We may run $K_{\bar{X}}+B_{\bar{X}}$-MMP over $X$ and obtain the following commutative diagram:

\[\begin{tikzcd}
	{\bar{X}} && Y \\
	& X
	\arrow[dashed, from=1-1, to=1-3]
	\arrow["\eta"', from=1-1, to=2-2]
	\arrow["{\eta'}", from=1-3, to=2-2]
\end{tikzcd}\]

such that $\eta': Y \to X$ is small. Any small projective bimeromorphic morphism with connected fibers between two strongly $\mbQ$-factorial varieties is an isomorphism. In particular, $\eta'$ is an isomorphism. Thus the last divisorial contraction in the above MMP is the required maximal divisorial blowup. This satisfies the following:
$p:Z \to X$ such that $\rho(Z/X)=1$ and the exceptional locus is precisely $E$ and $K_Z+B_Z+\frac{1}{\lambda}\beta_Z=p^*(K_X+B_X+\frac{1}{\lambda}\beta_X)$.

\begin{claim}
    $K_Z+B_Z+\frac{1}{\lambda}\beta_Z$ is not pseudo-effective over $S$.
\end{claim}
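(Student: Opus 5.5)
Since $p$ is crepant for the generalized pair, $K_Z+B_Z+\frac{1}{\lambda}\beta_Z=p^*(K_X+B_X+\frac{1}{\lambda}\beta_X)$. It therefore suffices to test this class against a family of curves on $Z$ that covers $Z$ and lies over points of $S$. The natural family consists of the strict transforms on $Z$ of the general fibres of $\phi:X\to S$. Recall that $\dim S=2$, so the general fibre $X_s$ of $\phi$ is a curve. Since $E$ is $p$-exceptional, $p(E)$ has codimension at least $2$ in $X$ and hence does not dominate $S$. So for a general point $s\in S$, the fibre $X_s$ avoids $p(E)$, and its strict transform $\widetilde{X}_s\subset Z$ is isomorphic to $X_s$ and satisfies $\widetilde{X}_s\cap E=\emptyset$. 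These curves cover a dense open subset of $Z$ and are contracted by $\phi\circ p$.

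The next step is to compute the intersection number. By the projection formula,
\[
(K_Z+B_Z+\tfrac{1}{\lambda}\beta_Z)\cdot \widetilde{X}_s=(K_X+B_X+\tfrac{1}{\lambda}\beta_X)\cdot X_s .
\]
By the definition of the quasi-effective threshold, $\mu(K_X+B_X)+\beta_X\equiv 0$ over $S$, so $\beta_X\cdot X_s=-\mu(K_X+B_X)\cdot X_s$. Hence
\[
(K_X+B_X+\tfrac{1}{\lambda}\beta_X)\cdot X_s=\Bigl(1-\frac{\mu}{\lambda}\Bigr)(K_X+B_X)\cdot X_s .
\]
Since $\lambda>\mu>0$, the factor $1-\mu/\lambda$ is positive. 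Since $\phi$ is a $K_X+B_X$-negative extremal contraction, $(K_X+B_X)\cdot X_s<0$. So the intersection number is strictly negative.

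It remains to conclude that the class is not pseudo-effective over $S$. If $K_Z+B_Z+\frac{1}{\lambda}\beta_Z$ were pseudo-effective over $S$, then its restriction to a general fibre of $\phi\circ p$ would be pseudo-effective. Over a general $s$ this fibre is the smooth-locus curve $\widetilde{X}_s$. On a curve, a pseudo-effective class has nonnegative degree, so the degree computed above would have to be $\geq 0$, contradicting its negativity.

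The main obstacle is making this last step rigorous in the Kähler setting, with currents rather than divisors. I would argue as follows. Suppose the class contained a closed positive current $T$ modulo $(\phi\circ p)^*$ of a class on $S$. The pullback class restricts trivially to fibres. Because $T$ has local potentials, its restriction to a very general fibre is well defined, except where the fibre lies in the polar set, and the fibres are curves that move in a covering family; this avoids that issue. The restriction is then a positive current on the curve, whose degree is nonnegative. Alternatively, I would use that the general fibre curves are movable, so the class of $\widetilde{X}_s$ lies in the dual of the relatively pseudo-effective cone in the analytic setting (Boucksom–Demailly–Păun–Peternell type duality, as used in \cite{DHY26}).
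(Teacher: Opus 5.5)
Your proposal is correct and takes the same route as the paper. The paper also reduces via the crepant identity $K_Z+B_Z+\frac{1}{\lambda}\beta_Z=p^*(K_X+B_X+\frac{1}{\lambda}\beta_X)$ and then uses $\lambda>\mu$, which amounts to $(K_X+B_X+\frac{1}{\lambda}\beta_X)\cdot X_s=(1-\frac{\mu}{\lambda})(K_X+B_X)\cdot X_s<0$ on a general fibre. The difference is only in detail: the paper calls this last step obvious, whereas you write out the fibre computation and explain why a relatively pseudo-effective class has nonnegative degree on the covering family of general fibres.
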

\begin{proof}
    $K_Z+B_Z+\frac{1}{\lambda}\beta_Z=p^*(K_X+B_X+\frac{1}{\lambda}\beta_X)$. It is enough to show $K_X+B_X+\frac{1}{\lambda}\beta_X$ is not pseudo-effective over $S$. This is obvious as $\lambda>\mu$.
\end{proof}    
 Therefore, we may run $K_Z+B_Z+\frac{1}{\lambda}\beta_Z$-MMP over $S$ which terminates with a Mori fiber space. 
 

 \[
 \] 

  If the first non-flipping contraction in this MMP is divisorial, then the $K+B+\frac{1}{\lambda}\beta$-MMP with the scaling of a very general K\"ahler class $\omega$, consists of a sequence of $K+B+\frac{1}{\lambda}\beta$-flips $X\dashrightarrow Z'$ followed by a $K+B+\frac{1}{\lambda}\beta$-negative contraction $Z'\rightarrow X_1$. Since $\rho(X_1/S)=1$ and $X_1\rightarrow S_1=S$ is a $K_{X_1}+B_{X_1}+\frac{1}{\lambda}\beta_{X_1}$-negative contraction and thus a $K_{X_1}+B_{X_1}$-negative fiber space as $\beta_{X_1}$ is big on $X_1$. So we have a link of type II.
 \[
 \begin{tikzcd}
& Z \arrow[r, dashrightarrow] \arrow{ld}&  Z'\arrow{rd}&\\
X \arrow{d}& & & X_1 \arrow{d}\\
S & & & S_1 \arrow{lll}
\end{tikzcd}
\] 

\begin{claim}\label{clm:E survive}
    The contraction $Z'\to X_1$ cannot contract the transform of $E$ on $Z'$ in the link of type II.
\end{claim}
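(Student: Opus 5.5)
The plan is to argue by contradiction, using relative Picard numbers over $S$ together with the fact that $E$ is the unique $p$-exceptional divisor. Suppose the divisorial contraction $Z'\to X_1$ contracts $E_{Z'}$, the strict transform of $E$. The flips $Z\dashrightarrow Z'$ are isomorphisms in codimension one and preserve relative Picard numbers, so
\[\rho(Z'/S)=\rho(Z/S)=\rho(Z/X)+\rho(X/S)=2.\]
Contracting $E_{Z'}$ gives $\rho(X_1/S)=1$. The induced bimeromorphic map $X\dashrightarrow X_1$ over $S$ then extracts no divisor and contracts no divisor. Indeed, the only divisor on $Z$ that is not a strict transform from $X$ is $E$, and $E$ has been contracted again. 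So $X\dashrightarrow X_1$ is a small bimeromorphic map over $S$ between strongly $\mbQ$-factorial varieties, both of relative Picard number one over $S$.

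Next I would compare the divisors along this small map. Let $\psi:X\dashrightarrow X_1$ be the map and write $\Delta=K_X+B_X+\frac{1}{\lambda}\beta_X$. Since the MMP steps are $\Delta$-negative, for a common resolution $W$ with $r:W\to X$ and $s:W\to X_1$ I expect
\[r^*\Delta = s^*\Delta_{X_1}+G, \qquad G\geq 0,\]
where $G\neq 0$ because at least one step (the contraction of $E_{Z'}$, which is $\Delta$-negative) occurs. Here I use $p^*\Delta=\Delta_Z$, the crepancy of the maximal blowup. On the other hand, $\rho(X/S)=\rho(X_1/S)=1$ and $\psi$ is small, so $\psi_*$ identifies $N^1(X/S)$ with $N^1(X_1/S)$. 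Both $-(K_X+B_X)$ on $X$ and $-\Delta_{X_1}$ on $X_1$ are relatively ample. Since $\lambda>\mu$, $\Delta$ is relatively anti-ample on $X$, and $\Delta_{X_1}$ is anti-ample over $S$ because $X_1\to S$ is a $\Delta$-negative Mori contraction. So $\psi$ is a small map over $S$ under which a relatively ample class ($-\Delta$) stays relatively ample. Such a map extends to an isomorphism over $S$: the standard argument applies the negativity lemma (\cite[Lemma 1.3]{Wang21}) to $r^*\Delta-s^*\Delta_{X_1}$ over $X$ and over $X_1$, giving $G=0$, and \cite[Lemma 2.16]{DHY26} then upgrades this to an isomorphism.

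This contradicts $G\neq 0$. Equivalently, if $Z'\to X_1$ contracted $E_{Z'}$, the composite $Z\dashrightarrow X_1$ would be a nontrivial $\Delta$-negative MMP ending in a model that is $\Delta$-crepant to $X$ via $p$. That is impossible, because the discrepancy of $E$ with respect to $(X_1,\Delta_{X_1})$ would be strictly larger than $-\theta(E)=a(E,X,\Delta)$. Yet $X_1\cong X$ over $S$ forces equality.

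The main obstacle is justifying the relative negativity/isomorphism step in the Kähler setting with a moduli part $\beta$ that is only a current class. My first attempt would be to work entirely with $H^{1,1}_{BC}$ classes on the projective morphisms over $S$, where the negativity lemma of \cite{Wang21} applies verbatim. I would then check that the discrepancy comparison for $E$ only involves $\beta_W$ pulled back via the nef model, as in the computation of $m_E$ preceding Remark~\ref{rmk:local threshold theta}.
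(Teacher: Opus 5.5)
Your proposal is correct and follows essentially the paper's own argument. Both assume $E$ is contracted, deduce that $X\dashrightarrow X_1$ is small over $S$ with relatively anti-ample log canonical classes on both sides, conclude (via \cite[Lemma 2.16]{DHY26}, which you phrase through the negativity lemma forcing $G=0$) that the generalized discrepancy of $E$ cannot change, and contradict the strict inequality $a(E,X_1,B_{X_1}+\frac{1}{\lambda}\beta_{X_1})>-\theta(E)$ coming from the negative divisorial contraction. The only cosmetic differences are that you use $-(K+B+\frac{1}{\lambda}\beta)$ rather than $-(K+B)$ as the relatively ample class, and that you justify smallness via relative Picard numbers, which the paper leaves implicit.
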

\begin{proof}[Proof of claim \ref{clm:E survive}]
    If possible, let us assume that $E$ is contracted. Then the induced bimeromorphic map $X\dashrightarrow X_1$ would be small. Note that both $-(K_X+B_X)$ and $-(K_{X_1}+B_{X_1})$ are ample over $S$. Thus it follows that $X\dashrightarrow X_1$ is an isomorphism by \cite[Lemma 2.16]{DHY26}. Note that $a(E,X,B_X+\frac{1}{\lambda}\beta_X)= -\theta(E)$. Since $Z'\to X_1$ is $(K_{Z'}+B_{Z'}+\frac{1}{\lambda}\beta_{Z'})$-negative, $a(E,X_1,B_{X_1}+\frac{1}{\lambda}\beta_{X_1})> -\theta(E)$. This is a contradiction. Hence $E$ survives as a divisor on $X_1$.
\end{proof}

Now, if the first non-flipping contraction is not a divisorial contraction, then $K+B+\frac{1}{\lambda}\beta$-MMP consists of a sequence of $K+B+\frac{1}{\lambda}\beta$-flips $X\dashrightarrow Z'$ followed by a $K_{X_1}+B_{X_1}$-negative Mori fiber space contraction $Z'=X_1\rightarrow S_1$. Observe that it is in fact a $K_{X_1}+B_{X_1}$-negative Mori fiber space contraction. Since $\rho(X_1/S)=\rho(Z/S)=2$, we have $\rho(S_1/S)=1$, so we have a link of type I.
 \[
 \begin{tikzcd}
& Z \arrow[r, dashrightarrow] \arrow{ld}&  X_1\arrow{dd}\\
X \arrow{d}& &  \\
S & &  S_1\arrow{ll}
\end{tikzcd}
\]

Doing the exact same analysis as \cite{BM97} we get that the each component of the Sarkisov triplet is non-increasing. In fact  $\mu$ remains stationary and both of $S$ and $S_1$ are surfaces, hence $X_1$ and $X$ are isomorphic over a large (Zariski) open set of $S$. Similarly, as in the projective threefold case, we can see (by virtue of MMP and the Negativity lemma (\cite[Lemma 1.3]{Wang21}) that $\lambda_1\leqslant \lambda$ and when $\lambda_1=\lambda$ then $e_1<e$.\\
So we go back to the start with a strictly decreased Sarkisov degree in the case of untwisting by  a link of type I or type II.\\
\end{enumerate}
We have shown that $\Phi$ can be untwisted in terms of Sarkisov links. Now we need to show the termination of the sequence of links.\\

\section{A version of ACC for generalized log canonical threshold in complex analytic setting}\label{sec: ACC}
In this section, we prove a version of ACC for generalized thresholds in the complex analytic setting with the moduli part being a current. This will be used in our proof of the termination of links. Before proving the theorem, we recall a definition.
\begin{definition}\cite[Definition 2.43]{HX26}\label{def:NQC}
    Let $X$ be a normal compact complex analytic space in the Fujiki class $\mathcal{C}$ with rational singularities. Let $\alpha\in H_{BC}^{1,1}(X)$ be a nef class. Then we say $\alpha$ is NQC if there exist $a_i\in \mbR^{>0}$ and $\alpha_i\in H^2(X,\mbQ)$ such that $\alpha=\sum a_i\alpha_i$ under the canonical inclusion $H_{BC}^{1,1}(X)\to H^2(X,\mbR)$ such that $\alpha_i$ is non-negative on $\NA(X)$.
\end{definition}
The following theorem is a version of ACC statement which will be useful to prove the termination of links.
More precisely,
\begin{theorem}\label{thm:ACC}
    Let $\Lambda$ be a DCC set of non-negative real numbers and $n$ be a fixed positive integer. We consider a collection of complex analytic spaces satisfying the following:
    \begin{enumerate}
        \item $(X,B+\gamma)$ is a generalized log canonical pair of dimension $n$ such that coefficients of $B$ belong to $\Lambda$ and $\beta$ is a closed b-(1,1)-current on $X$ where $X$ is a normal compact complex analytic space in the Fujiki class $\mathcal{C}$ with rational singularities.
        \item  Suppose $f:Y\to X$ is a log resolution of $(X,B+\gamma)$ such that both $\beta_Y$ and $\gamma_Y$ are NQC classes (see Definition \ref{def:NQC}) and descend on $Y$.  Assume that $\beta_Y=\sum a_i (\beta_Y)_i, \gamma_Y=\sum b_j (\gamma_Y)_j$  where $a_i, b_j\in \Lambda$  and $ (\beta_Y)_i, (\gamma_Y)_j\in H^2(Y,\mbZ)$  for all $i,j$ and each $(\beta_Y)_i$ and $(\gamma_Y)_i$ is non-negative on $\NA(Y)$.
        \item  Let $D\geq 0$ be an $\mathbb{R}$-Cartier $\mbR$-divisor and the coefficients of $D$ are contained in $\Lambda$.
    \end{enumerate}
 Then $\tau=\sup\{ t\geq 0 | K_X+B+\gamma+t(D+\beta)$ is a generalized lc pair $\}$ belongs to an ACC set $\Theta$.
\end{theorem}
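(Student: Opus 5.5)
The plan is to reduce Theorem \ref{thm:ACC} to the algebraic ACC for generalized lc thresholds of Birkar--Zhang. Their result is local over the base: it allows the moduli part to be an $\mathbb{R}_{\ge 0}$-combination, with DCC coefficients, of nef Cartier b-divisors relative to the base. So the real task is to show that, near any point of $X$, our situation is analytically of that form and can then be algebraized.

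\textbf{Step 1: $\tau$ is a minimum of local thresholds.} Since $\beta_Y$ and $\gamma_Y$ descend to $Y$ and $f$ is a log resolution, write $f^*(\gamma_X)=\gamma_Y+E_\gamma$ and $f^*(\beta_X)=\beta_Y+E_\beta$. Because $\beta_Y$, $\gamma_Y$ are nef over $X$, the negativity lemma \cite[Lemma 1.3]{Wang21} gives $E_\gamma,E_\beta\ge 0$. Then
\[K_Y+B_Y+\gamma_Y+t(D_Y+\beta_Y)=f^*\bigl(K_X+B+\gamma+t(D+\beta)\bigr),\]
with $D_Y=f^*D+E_\beta$. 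The generalized lc condition is therefore read off from the coefficients of an snc divisor on $Y$. In particular $\tau$ is a finite minimum, over prime divisors $P\subset Y$, of $(1-\mult_P B_Y)/\mult_P D_Y$. Each of these is computed in an arbitrarily small neighbourhood $U$ of a point $x\in f(P)$. So it suffices to show that the local thresholds $\tau_x$ lie in a fixed ACC set $\Theta$ depending only on $\Lambda$ and $n$.

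\textbf{Step 2: the moduli part becomes relatively nef line bundles.} Take a small Stein neighbourhood $U\ni x$ and set $Y_U=f^{-1}(U)$. Since $X$ has rational singularities, $R^if_*\mathcal{O}_Y=0$ for $i>0$. Using the exponential sequence over $Y_U$ (with $U$ Stein and contractible), each integral class $(\gamma_Y)_j|_{Y_U}$ and $(\beta_Y)_i|_{Y_U}$ is $c_1$ of a holomorphic line bundle $L_j$, respectively $M_i$. Each such bundle is nef over $U$, because the class is non-negative on $\NA(Y)$ and hence on curves in fibres. The currents themselves only matter through their classes, because the discrepancies above depend only on $E_\gamma,E_\beta$, which are determined by the classes. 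Thus, over $U$, the pair is a relative generalized lc pair $(U,B+\mathbf{M})$ with
\[\mathbf{M}=\textstyle\sum b_j\overline{L_j}+t\sum a_i\overline{M_i},\]
where the $\overline{L_j},\overline{M_i}$ are Cartier b-divisors, nef over $U$, and all coefficients $a_i,b_j$ and all coefficients of $B,D$ lie in $\Lambda$.

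\textbf{Step 3 (main obstacle): algebraization.} The local threshold depends only on finitely many numerical data. These are the multiplicities along the $f$-exceptional divisors of $f^*D$, of $K_Y-f^*K_X$, of $E_\beta$ and of $E_\gamma$. The last two are determined by intersection numbers of the $L_j,M_i$ with curves in the fibre over $x$. I would first try to realise all of this by an algebraic germ. Concretely, apply Artin approximation to the germ $(X,x)$ together with $B$, $D$ and the resolution $f$ (via Hironaka's algebraization of proper modifications, or finite determinacy of discrepancies on a fixed log resolution). The goal is an algebraic germ with a log resolution having the same dual complex, the same coefficients, and the same relative nef line bundles up to numerical equivalence over the base, hence the same discrepancies. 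The delicate points are preserving relative nefness and the integrality of the classes under approximation; if this fails, the fallback is to rerun the HMX/Birkar--Zhang argument (adjunction to an lc centre and global ACC on a projective fibre) directly. That fallback is more plausible than it looks, because the fibres of $f$ over $x$ are projective: $f$ is projective and $Y$ is in class $\mathcal{C}$, so the global ACC would only be needed for projective varieties.

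Once a local threshold $\tau_x$ equals an algebraic generalized lc threshold with coefficients in $\Lambda$ and dimension $n$, \cite{BZ16} (Birkar--Zhang) places it in an ACC set $\Theta=\Theta(\Lambda,n)$. Then $\tau=\min_x\tau_x\in\Theta$ by Step 1, since a minimum of finitely many elements of $\Theta$ lies in $\Theta$.
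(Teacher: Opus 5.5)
Your Steps 1 and 2 match the paper's reduction. The paper covers $X$ by finitely many Stein opens $U$. It uses rational singularities to get $H^i(f^{-1}(U),\mathcal{O})=0$, and then the exponential sequence to turn the integral classes $(\gamma_Y)_j,(\beta_Y)_i$ into line bundles. It goes one step further and represents these by divisors $M',P'$ on $f^{-1}(U)$, with pushforwards $M,P$ on $U$. The negativity lemma then shows that the threshold computed with these divisors equals the one computed with the currents, and finally $\tau=\min_U\tau_U$. (A minor point in your Step 1: only $K_X+B+\gamma$, not $\gamma$ itself, is assumed to pull back. So write $K_Y+B_Y+\gamma_Y=f^*(K_X+B+\gamma)$ directly rather than introducing $E_\gamma$.)

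\textbf{The gap is Step 3.} This step carries all the content of an ACC statement, and you do not carry it out.

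\emph{The algebraization route is not justified.} Artin's algebraization gives an isomorphism with an algebraic germ only for isolated singularities, and analytic germs with non-isolated singularities need not be algebraizable. You would have to algebraize, simultaneously, the germ $(X,x)$, the divisors $B$ and $D$ (whose supports are never isolated), the modification $f$, and the bundles $L_j,M_i$. A high-order approximation instead would need a finite-determinacy theorem for generalized lc thresholds on singular germs. Such a theorem would have to preserve:
\begin{itemize}
\item $\mathbb{R}$-Cartierness of $K+B+\mathbf{M}$;
\item nefness of the approximating bundles over the base;
\item integrality of their classes;
\item the multiplicities of $K_Y-f^*K_X$ and $E_\beta$.
\end{itemize}
No such result is available; finite determinacy is known for log canonical thresholds of ideals on smooth germs, not in this generality.

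\emph{The fallback is the whole theorem.} Rerunning the Hacon--McKernan--Xu / Birkar--Zhang argument would require a dlt modification and an MMP over a Stein base, then generalized adjunction to an lc centre with DCC coefficients, then global ACC on the projective centre over $x$. That may well work, but it is the proof of an analytic ACC theorem, not something you can leave as a remark.

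\textbf{What the paper does instead.} After the Step 2 reduction, the local generalized pair $(U,B_U+M)$, with nef part $P$ and DCC set $\Lambda\cup\{0\}$, satisfies the hypotheses of \cite[Theorem 3.3]{HX26B}. That theorem puts $\tau_U$ directly into an ACC set, with no algebraization. Without that input, or a complete proof of your fallback, your argument does not place the local thresholds $\tau_x$ in an ACC set.
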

\begin{proof}
Cover $X$ by finitely many Stein open subsets, i.e., $X=U_1 \cup U_2\cup \cdots \cup U_d$. Fix any $U_i=U$ and define $f_U: V=f^{-1}(U) \to U$ to be the induced morphism. Since $X$ has rational singularities, $U$ has also rational singularities. Hence it follows that $H^i(V,\mathcal{O}_V)=H^i(U,\mathcal{O}_U)=0$ for all $i>0$ as $U$ is Stein. 
Let $\tau_U$ be the glc threshold for the pair $(U, B_U+\gamma_U)$ with respect to $(D_U+\beta_U)$ where $K_U+B_U+\gamma_U=(K_X+B_X+\gamma)|_U$ and $(D_U+\beta_U)=(D+\beta)|_U$.

Write $\gamma_V= \sum b_j (\gamma_V)_j$ where $b_j\in \Lambda$ and $(\gamma_V)_j\in H^2(V,\mbZ)$. From  the long exact sequence in cohomology associated to the exponential exact sequence, we obtain that $(\gamma_V)_i= c_1(L_i)$ where $L_i$ is a holomorphic line bundle on $V$ as $H^2(V,\mathcal{O}_V)=0$. Since $U$ is Stein and $f|_U$ is projective, we can say $L_i$ is represented by a divisor $M_i'$. In particular, we replace the class $\gamma_V=\sum a_i M_i'=M'$ where $M'$ is a divisor on $V$. Similarly, $\beta_V$ is also represented by some divisor $P'$ on $V$. Note that $(f_U)_*(\beta_V)=(\beta_U)$. Let $M$ and $P$ be two divisors on $U$ defined by $M:=(f_U)_*M', P:=(f_U)_*P'$. We may shrink $U$, if necessary, and assume $U$ is of finite CW type. Let
\[\tau_U=\sup\{t\geq 0 | K_U+B_U+\gamma_U+t(D_U+\beta_U)\text{is generalized log canonical} \}.\] 

Consider the following commutative diagram:
\[\begin{tikzcd}
	V && Y \\
	U && X
	\arrow["{i_V}", hook, from=1-1, to=1-3]
	\arrow["{f_U}", from=1-1, to=2-1]
	\arrow["f", from=1-3, to=2-3]
	\arrow["{i_U}"', hook, from=2-1, to=2-3]
\end{tikzcd}\]
Now note that $H^1(U,\mathcal{O}_U)=H^2(U,\mathcal{O}_U)=0$. Thus it follows that $\Pic (U)\cong H^2(U,\mbZ)$, and hence $\Pic(U)\otimes \mbR\cong H^2(U,\mbZ)\otimes \mbR\cong H^2(U,\mbR)$.
From the above diagram, we obtain a local reduction such that $f_U:V \to U$ is the log resolution of $(U, B_U, M)$ satisfying $K_V+B_V+M'=(f_U)^*(K_U+B_U+M)$ , $(K_U+B_U+M)$ is $\mbR$-Cartier and $M,P$ are divisors on $U$ whose Chern classes are given by $\gamma_U$ and $\beta_U$ respectively.
\begin{claim}\label{clm: glc-currrnt-divisor same}
    $\tau_U=\sup\{t\geq 0 | K_U+B_U+M+t(D_U+P)$ is generalized log canonical$\}$.
\end{claim}
\begin{proof}[Proof of Claim \ref{clm: glc-currrnt-divisor same}]
    Fix a $t_0$. Choose an $\mbR$- cartier $\mbR$-divisor $A_U$ on $U$ whose Chern class is given by $(K_U+B_U+\gamma_U+t_0(D_U+\beta_U))$. Let $A_V:=(K_V+B_V+M'+t_0(D_V+P'))$ whose Chern class is same as $(K_V+B_V+\gamma_V+t_0(D_V+\beta_V))$. Note that $c_1(A_V-(f_U)^*A_U)=0$.
    Note that $E= A_V-(f_U)^*(K_U+B_U+M+t_0(D_U+P))$ is exceptional divisor and $\mbR$- linearly trivial over $U$. By negativity lemma, we have $E=0$.
    Hence $A_V=(f_U)^*(K_U+B_U+M+t_0(D_U+P))$. Thus both the discrepancy for the current generalized pair $(U,B_U+t_0D_U+\gamma_U+t_0\beta_U)$ and for the divisor generalized pair $(U, B_U+t_0D_U+M+t_0P)$ are the same. Hence our claim follows.
\end{proof}

Therefore the local pair on $U$ is a generalized log canonical pair and it satisfies the hypothesis of \cite[Theorem 3.3]{HX26B} taking the DCC set to be $\Lambda \cup \{0\}$. Therefore it follows that $\tau_U \in \Theta$ where $\Theta$ is an ACC set. Observe that $\tau=\min \{\tau_U | U \in \{U_1,U_2,...,U_d\}\}$. Let $\Theta'$ be a subset of $\Theta$ containing all such $\tau$. Then $\Theta'$ must satisfy ACC. In particular, ACC for generalized lc thresholds holds in this case.
\end{proof}

\section{Termination of Sarkisov Links}\label{sec:term}
In this section, we show any such sequence of Sarkisov links that decomposes the bimeromorphic map $\Phi:X\dashrightarrow X'$, will terminate. We begin with an important lemma.\\

\begin{lemma}\label{lem:co-betaX'}
     There are positive real numbers $b_1,b_2,...,b_s$ and classes $\eta_j\in H^2(X',\mbZ)$ such that $\beta_{X'}=\sum_{j=1}^s b_j\eta_j$ and each $\eta_j$ is non-negative on curve classes of $X'$.
 \end{lemma}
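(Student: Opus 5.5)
Recall from Definition \ref{homaloidal} that $\beta_{X'}=-\mu'(K_{X'}+B_{X'})+\phi'^{*}\omega'$, where $\phi':X'\to S'$ is the Mori fiber structure and $\omega'$ is a K\"ahler form on $S'$. The idea is to decompose the two summands separately. Each summand will be written as a positive combination of rational classes that are non-negative on curves. The key input is that $\rho(X'/S')=1$. Hence $\NA(X')$ is generated by the extremal ray $R'=\mbR^{\geq0}[C']$ contracted by $\phi'$, together with the pullback directions coming from $\NA(S')$. More precisely, a curve class on $X'$ is determined by its image in $N_1(S')$ together with its intersection with $-(K_{X'}+B_{X'})$.

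\textbf{Step 1: the base.} Since $S'$ is a compact K\"ahler surface with klt (hence rational) singularities, the K\"ahler cone of $S'$ is an open cone in $H^{1,1}_{BC}(S')$. Also, $H^{1,1}_{BC}(S')\cap H^2(S',\mbQ)$ spans $NS(S')_{\mbR}$. I do not expect $\omega'$ itself to be rational, so I would not try to write $[\omega']$ as a positive combination of integral K\"ahler classes. Instead, I would perturb the choice in Definition \ref{homaloidal}. We are free to pick $\omega'$, and $\mu'$ is then chosen so that $\beta_{X'}$ is K\"ahler. Moreover, the Sarkisov degree only depends on $\beta_{X'}$ up to a positive scaling that is compatible with the links.

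I would therefore first try to arrange that $[\omega']$ is a positive real combination of classes $\omega'_k\in H^2(S',\mbQ)$ that are nef on $S'$. If $S'$ is projective this is immediate, since one can take ample Cartier divisors. If $S'$ is non-projective, the rational classes in $H^{1,1}$ might not be able to reach the K\"ahler cone. In that case I would write $[\omega']$ as a positive combination of finitely many classes $\omega'_k$ lying in the closed nef cone and in $H^2(S',\mbQ)$. This works whenever $[\omega']$ lies in the interior of the cone spanned by the rational nef classes; otherwise, following the spirit of Definition \ref{def:NQC}, I would use the classes $\omega'_k\in H^2(S',\mbQ)$ that are non-negative on $\NA(S')$ without requiring them to be of type $(1,1)$. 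Clearing denominators then gives classes in $H^2(S',\mbZ)$.

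\textbf{Step 2: the relative part.} Since $X'$ is strongly $\mbQ$-factorial klt, some multiple $m(K_{X'}+B_{X'})$ with rational coefficients of $B_{X'}$ is Cartier. If the $d'_{k'}$ are irrational, I would first decompose $B_{X'}$ as a convex combination $\sum r_l B_l$ with each $B_l$ rational and each $-(K_{X'}+B_l)$ still $\phi'$-ample. This is possible since relative ampleness is an open condition, and it gives classes $-m(K_{X'}+B_l)\in H^2(X',\mbZ)$. These classes are positive on $R'$, but they may be negative on curves that are not contracted by $\phi'$. To correct this I would add a large multiple of $\phi'^{*}$ of a fixed rational class from Step 1. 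Concretely, I would set
\[
\eta_l=-m(K_{X'}+B_l)+N\phi'^{*}\omega'_1 ,
\]
which is non-negative on all curves for $N\gg0$ by the relative cone theorem for $\phi'$. I would then check that subtracting the extra $N\phi'^{*}\omega'_1$ terms from the pullback part still leaves positive coefficients. This amounts to shrinking $\mu'$, or equivalently rescaling $\omega'$. Since $\mu'$ can be taken arbitrarily small in Definition \ref{homaloidal}, I would choose it small enough that this holds.

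\textbf{Main obstacle.} I expect the hard part to be the non-projective base $S'$: one must show that a K\"ahler class, possibly after a harmless modification of $\omega'$, is a positive combination of integral classes that are non-negative on $\NA$. My first attempt would be to exploit the freedom in choosing $\omega'$ and use the openness of the K\"ahler cone, restricted to the rational span $H^{1,1}\cap H^2(S',\mbQ)$, whenever that span meets the K\"ahler cone. If it does not, I would fall back on the NQC formalism of \cite[Definition 2.43]{HX26}, where the $\alpha_i$ are only required to lie in $H^2(\cdot,\mbQ)$.
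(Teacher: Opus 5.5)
There is a genuine gap, and it is exactly the step you label the ``main obstacle'' and leave unresolved. In the relevant case $S'$ is necessarily non-projective: $\phi'$ is projective, so a projective $S'$ would make $X'$ projective. Hence the projective shortcut in Step~1 never applies, and rational $(1,1)$-classes need not reach the K\"ahler cone. Saying you would ``fall back on the NQC formalism'' names the target but does not show that $[\omega']$ is a positive combination of rational classes non-negative on $\NA(S')$.

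The missing argument runs as follows. A K\"ahler class is strictly positive on $\NA\setminus\{0\}$. Since $\NA$ is a closed cone with compact slice, every class in a small neighbourhood of it in $H^2(\cdot,\mbR)$ is still non-negative on $\NA$. Since $H^2(\cdot,\mbQ)$ is dense, you can pick finitely many rational classes in that neighbourhood whose convex hull contains the given class in its interior. The paper obtains this by writing $T\beta_{X'}=(K_{X'}+B_{X'})+\omega$ with $\omega$ K\"ahler and citing \cite[Lemma 2.45]{HX26} to conclude that $\beta_{X'}$ is NQC; clearing denominators then finishes.

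Once you have that ingredient, apply it directly to $\beta_{X'}$, which is K\"ahler on $X'$ by Definition~\ref{homaloidal}. The split into a base part and a relative part is then unnecessary, and it introduces further problems.

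\begin{itemize}
\item In Step~2, $-m(K_{X'}+B_l)+N\phi'^{*}\omega'_1$ is non-negative on $\NA(X')$ for $N\gg0$ only if $\omega'_1$ is \emph{strictly} positive on $\NA(S')\setminus\{0\}$. In that case, write $-m(K_{X'}+B_l)+\phi'^{*}\omega_{S'}$ as K\"ahler and use that $N\omega'_1-\omega_{S'}$ is non-negative on $\NA(S')$; the ``relative cone theorem'' is not what gives this.
\item A class that is merely non-negative, which is all your Step~1 provides, can vanish on a curve $\Gamma\subset S'$. Then no multiple of $\phi'^{*}\omega'_1$ compensates the negativity of $-(K_{X'}+B_l)$ on curves of $X'$ dominating $\Gamma$.
\item Shrinking $\mu'$ and perturbing $\omega'$ changes $\beta_{X'}$. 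At best you prove the statement for a re-chosen moduli part, not for the one fixed in Definition~\ref{homaloidal} and used throughout the program, and you would have to say that this choice is made before the program starts.
\end{itemize}
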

\begin{proof}
    Choose $T>>0$ such that $\omega=T\beta_{X'}-(K_{X'}+B_{X'})$ is K\"ahler. By \cite[Lemma 2.45]{HX26} $\beta_{X'}$ is an NQC class in the sense of Definition \ref{def:NQC}. We may write 
    \[ \beta_{X'}=\sum_{i=1}^s r_j\eta'_j, r_j>0, \eta'_j \in H^2(X',\mathbb{Q})\]
where each $\eta_j$ is nonnegative on $\NA(X')$, and thus on curve classes as well. Choose $n_j>0$ such that $\eta_j= n_j\eta'_j\in H^2(X',\mbZ)$. Thus we obtain our desired result where $b_j=\frac{r_j}{n_j}$ for all $j\in\{1,2,...,s\}$.
\end{proof}

\textbf{Claim $1$}:\textit{There cannot be infinitely many untwistings by links in the case of $\lambda \leqslant \mu$. }\\
\begin{proof}
Note that we are only interested in non-projective compact K\"ahler threefolds. In this case, we obtain a sequence of links of  type III. Since a link of type III contracts a divisor, the Picard number drops by $1$. Hence we can only have a finite number of such untwistings.\\

\end{proof}
\textbf{Claim $2$}: \textit{There is no infinite sequence of successive untwistings by the links in the case $\lambda>\mu$}. 
\begin{proof}
Observe that the quasi-effective threshold remains the same throughout the entire process. Let $\mu_0$ be the fixed pseudo-effective threshold. Note that if $\lambda_i=\lambda_{i+1}$ then $e_{i+1}<e_{i}$ by the negativity lemma (see \cite[Lemma 1.3]{Wang21}), and hence we will be done. So we may assume that $\lambda_i$ is not stationary. So we have a non-decreasing sequence $\lbrace \frac{1}{\lambda_i}\rbrace$ which is bounded by $\frac{1}{\mu_0}$. Let $\alpha$ be its limit. We may assume that $\frac{1}{\lambda_i}< \alpha$ for sufficiently large $i$. 
\begin{claim}\label{clm:glc-i}
    $(X_i,B_{X_i}+\alpha\beta_{X_i})$ is generalized log canonical for every sufficiently large $i$.
\end{claim}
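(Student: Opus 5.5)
The argument is a contradiction argument that feeds the ACC statement Theorem \ref{thm:ACC} into the limit $\frac{1}{\lambda_i}\nearrow\alpha$. Suppose the claim fails. Then there are infinitely many indices $i$ for which $(X_i,B_{X_i}+\alpha\beta_{X_i})$ is not generalized $\mathcal{C}_\theta$-log canonical. For each such $i$ I would consider the generalized threshold
\[
t_i=\sup\{t\geq 0 \mid (X_i,B_{X_i}+t\beta_{X_i}) \text{ is generalized lc}\}.
\]
Since $(X_i,B_{X_i}+\frac{1}{\lambda_i}\beta_{X_i})$ is $\mathcal{C}_\theta$-glc and $\theta\geq 0$, it is in particular glc, so $\frac{1}{\lambda_i}\leq t_i$. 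Failure at $\alpha$ gives $t_i<\alpha$, or, if $\theta$ enters, the $\mathcal{C}_\theta$-version of this threshold lies below $\alpha$. Hence the thresholds lie in $[\frac{1}{\lambda_i},\alpha)$ and converge to $\alpha$ from below. After passing to a subsequence, they form a strictly increasing sequence.

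The next step is to put each threshold in the setting of Theorem \ref{thm:ACC}. I take $D=0$, $\gamma=0$, the boundary $B_{X_i}$, and $\beta=\beta_{X_i}$. The coefficients of $B_{X_i}$ are values of $\theta$ on the finitely many divisors involved, so they lie in a finite, hence DCC, set $\Lambda_0$.

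For the moduli part I use Lemma \ref{lem:co-betaX'}, which gives $\beta_{X'}=\sum_j b_j\eta_j$ with $\eta_j\in H^2(X',\mbZ)$ nonnegative on curves. On a common resolution $W_i$ of $X_i$ and $X'$, the trace $\beta_{W_i}$ is the pullback of $\beta_{X'}$. So $\beta_{W_i}=\sum b_j q^*\eta_j$, the classes $q^*\eta_j$ are integral and nonnegative on $\NA(W_i)$, and the coefficients $b_j$ are fixed independently of $i$. Taking $\Lambda=\Lambda_0\cup\{b_j\}$, which is finite and so DCC, Theorem \ref{thm:ACC} puts every $t_i$ in a single ACC set $\Theta$. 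A strictly increasing sequence in $\Theta$ is impossible, which is the contradiction.

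Some care is needed to reconcile the $\mathcal{C}_\theta$-threshold with the ordinary glc threshold. The divisors that matter have discrepancy thresholds of the form $\frac{a(E,X_i,B_{X_i})+\theta(E)}{m_E}$, where $\theta(E)\in\{0,\epsilon\}\cup\{d_k,d'_{k'}\}$. I would absorb $\theta$ by enlarging the boundary on a model where the relevant divisors appear. Concretely, I use the dominating object $(Y,B_Y)\in\mathcal{C}_\theta$ from Lemma 3.6(2) and compute thresholds there with boundary $\sum\theta(F)F$: the $\mathcal{C}_\theta$-glc condition on $X_i$ is equivalent to the ordinary glc condition for $(Y,B_Y-\epsilon'\cdot\text{exc})$. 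A cleaner route is the following. $\mathcal{C}_\theta$-glc with $\theta\leq\epsilon$ is sandwiched as follows: $t$ is admissible exactly when the pair $(X_i,B_{X_i}+t\beta_{X_i})$ has generalized discrepancies $\geq -\theta$. Rescaling coefficients on $W_i$, namely adding $(1-\theta(E))E$ to the boundary for each exceptional $E$, turns this into an ordinary generalized lc condition, with boundary coefficients in the finite set $\{1-\theta(E)\}\cup\Lambda_0$. I expect this bookkeeping step, making the $\mathcal{C}_\theta$-threshold literally an instance of Theorem \ref{thm:ACC} with a finite coefficient set and uniformly bounded data, to be the main obstacle.

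Finally, note that the limit statement follows once failure occurs for only finitely many $i$, because the condition holds for all $i\gg 0$. Together with $\frac{1}{\lambda_i}<\alpha$, this forces $\lambda_{i}$ to stabilize to $\frac{1}{\alpha}$ afterwards, which is then handled by $e$ decreasing.
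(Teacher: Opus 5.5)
Your core argument is correct and is essentially the paper's proof. For the indices where glc fails at $\alpha$, the ordinary glc thresholds $t_i$ of $(X_i,B_{X_i})$ with respect to $\beta_{X_i}$ lie in $[\frac{1}{\lambda_i},\alpha)$, so they contain a strictly increasing sequence. Theorem \ref{thm:ACC} then applies with $D=\gamma=0$ and the fixed finite set $\Lambda$ of $\theta$-values and the $b_j$ from Lemma \ref{lem:co-betaX'} (pulled back to $W_i$), and puts all $t_i$ in one ACC set, which is a contradiction.

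Some side remarks in your proposal need correcting, though none affects the proof of the claim:
\begin{itemize}
\item The claim is about ordinary generalized log canonicity, so the $\mathcal{C}_\theta$-bookkeeping you single out as the main obstacle is not needed.
\item The reduction you sketch for that bookkeeping is not justified. Adding $(1-\theta(E))E$ on a single model only records the divisors appearing there, while the $\mathcal{C}_\theta$-condition involves all divisors over $X_i$.
\item The implication from $\mathcal{C}_\theta$-glc to glc uses $\theta<1$, not $\theta\geq 0$.
\item Your closing remark that $\lambda_i$ then stabilizes at $\frac{1}{\alpha}$ is wrong. The paper keeps $\frac{1}{\lambda_i}<\alpha$ and uses this claim as input for the next two claims, including Claim \ref{clm:alpha-gklt}.
\end{itemize}
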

\begin{proof}[Proof of claim \ref{clm:glc-i}]
    Let $\tau_i$ be the glc threshold of the pair $(X_i,B_{X_i})$ with respect to $\beta_{X_i}$. Note that $\tau_i>\frac{1}{\lambda_i}$. For each $i$, take a common resolution $W_i$ which resolves $X_i\dashrightarrow X'$. Recall that $\beta_{X'}$ is K\"ahler on $X'$. In particular, $\beta_{X'}$ is an NQC class. Then by \cite[Lemma 2.44]{HX26}, we can say $q^*\beta_{X'}$ is also an NQC class.  Use the notations $B_{X_i}= B, \gamma=0, D=0, \beta_{X_i}=\beta$ and $\Lambda= \{0,\epsilon, b_1,....,b_s\} \cup \{$ Coefficients of $B_X\}\cup \{$ Coefficients of $B_{X'}\}$ where $b_i$ are the coefficients appearing in Lemma \ref{lem:co-betaX'}. As $\Lambda$ is finite, it satisfies DCC. By Theorem \ref{thm:ACC}, we obtain that $\tau_i$ belongs to an ACC set. If there exist infinitely many $\tau_i$ strictly smaller than $\alpha$, choose one $\tau_{i_1}$. Since $\frac{1}{\lambda_i}\to \alpha$, we can find $i_2$ such that $\tau_{i_1}<\frac{1}{\lambda_{i_2}}<\tau_{i_2}$. Repeating this process, we obtain a strictly increasing subsequence of $\{\tau_i\}$, which is a contradiction. Therefore $\tau_i\geq \alpha$ for sufficiently large $i$. This proves our claim. 
\end{proof}
\begin{claim}\label{clm: improve-dis}
    The maximal divisorial blowup and all bimeromorphic steps of the links improve $\alpha$-discrepancies, i.e discrepacy with respect to the generalized boundaries $B+\alpha\beta$.
\end{claim}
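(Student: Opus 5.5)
The plan is to follow the standard argument of \cite{BM97}. We compare generalized discrepancies $a(F,\cdot,B+\alpha\beta)$ before and after each bimeromorphic step of a link, using a common resolution and the Negativity lemma \cite[Lemma 1.3]{Wang21}. The key input is the ordering $\frac{1}{\lambda_i}<\alpha\leq\frac{1}{\mu_0}$, valid for $i$ large. Each step of a link is negative for one of the generalized boundaries $B+\frac{1}{\lambda_i}\beta$ (flips and contractions in the $\lambda>\mu$ case) or $B+\frac{1}{\mu}\beta$, or is crepant for $B+\frac{1}{\lambda_i}\beta$ (the maximal divisorial blowup). We interpolate between these boundaries using the linear dependence of discrepancies on the coefficient of $\beta$, namely $a(E,Y,B_Y+t\beta_Y)=a(E,Y,B_Y)-tm_E$, established just before Remark \ref{rmk:local threshold theta}.

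\emph{Maximal divisorial blowup $p\colon Z\to X_i$.} We have $K_Z+B_Z+\frac{1}{\lambda_i}\beta_Z=p^*(K_{X_i}+B_{X_i}+\frac{1}{\lambda_i}\beta_{X_i})$, with $E$ the only exceptional divisor. Since $\beta_Z=p^*\beta_{X_i}-m_EE$ with $m_E>0$, we get
\[K_Z+B_Z+\alpha\beta_Z=p^*(K_{X_i}+B_{X_i}+\alpha\beta_{X_i})-(\alpha-\tfrac{1}{\lambda_i})m_EE.\]
On $Z$ the coefficient $\theta(E)$ of $E$ in $B_Z$ therefore exceeds the $\alpha$-coefficient that $E$ has over $X_i$. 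For divisors over $Z$ the $\alpha$-discrepancies over $Z$ are then $\geq$ those over $X_i$, because we subtract $p^*$ of an effective correction pulled back to a resolution. This gives $a(F,Z,B_Z+\alpha\beta_Z)\geq a(F,X_i,B_{X_i}+\alpha\beta_{X_i})$ for every $F$. This is the meaning of ``improve'' for the blowup, with inequality relative to the $\theta$-boundary.

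\emph{Flips and divisorial contractions.} These are steps of the $(K+B+\frac{1}{\lambda_i}\beta)$-MMP over $S$, or of the $(K+B+\frac{1}{\mu}\beta)$-MMP over $T$ in type III. I would first check that such a step is also $(K+B+\alpha\beta)$-nonpositive on the flipped or contracted curves. Over $S$ every curve $C$ in the relevant extremal ray satisfies $(K+B+\frac{1}{\lambda_i}\beta)\cdot C<0$. For curves in fibres of the Mori fibre structure, $(K+B+\frac{1}{\mu_0}\beta)\cdot C\leq 0$, because the pseudo-effective threshold over $S$ is $\mu_0$ and the MMP with $\frac{1}{\lambda_i}<\frac{1}{\mu_0}$ never makes $\frac{1}{\mu_0}$ positive on contracted rays. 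The $\alpha$-degree lies on the segment between these two values, so it is $\leq 0$, and strictly negative unless both endpoints vanish, which cannot happen since the first is strictly negative. The standard negativity comparison on a common resolution $W$ then applies: writing $p^*(K+B+\alpha\beta)-q^*(K'+B'+\alpha\beta')$ as $q$-exceptional plus nef over the base gives, by \cite[Lemma 1.3]{Wang21}, $a(F,\cdot,\alpha)\leq a(F,\cdot',\alpha)$ for all $F$, with strict inequality over the flipping locus or the contracted divisor. For type III the steps are $(K+B+\frac{1}{\mu}\beta)$-negative with $\mu=\mu_0$. Combining this with $(K+B+\frac{1}{\lambda_i}\beta)$-nonpositivity on $\phi$-vertical curves gives the same conclusion by the same convexity argument.

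The main obstacle is the sign claim in the middle step. It requires showing that every extremal ray contracted in the $(K+B+\frac{1}{\lambda_i}\beta)$-MMP over $S$ is nonpositive for $K+B+\frac{1}{\mu_0}\beta$. I would try to derive this from the fact that $K_Z+B_Z+\frac{1}{\mu_0}\beta_Z$ is $p^*$ of a class numerically trivial over $S$, minus an effective multiple of $E$. On the flipping curves one needs $E\cdot C\geq0$, or else one must use Claim \ref{clm:E survive} to control $E$. If this fails I would fall back on the \cite{BM97} formulation, which states improvement only for divisors whose centres meet the locus of the modification, and verify that this suffices for the termination argument that follows.
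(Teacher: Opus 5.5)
Your treatment of the maximal divisorial blowup matches the paper's: from $\beta_Z=p^*\beta_{X_i}-m_EE$ you get $K_Z+B_Z+\alpha\beta_Z=p^*(K_{X_i}+B_{X_i}+\alpha\beta_{X_i})-(\alpha-\frac{1}{\lambda_i})m_EE$ with a positive coefficient, and that is all that is needed there.

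\textbf{The gap.} It lies in the flips and the final divisorial contraction. Your argument there rests on the inequality $(K+B+\frac{1}{\mu_0}\beta)\cdot Q\le 0$ for the ray $Q$ contracted at each step, and you do not prove it. The reason you give, that the threshold is $\mu_0$ so the MMP ``never makes $\frac{1}{\mu_0}$ positive on contracted rays'', only restates what has to be shown. It is not automatic. On $Z_i^k$ one has $K+B+\frac{1}{\mu_0}\beta\equiv_{S_i}-cE$ with $c=(\frac{1}{\mu_0}-\frac{1}{\lambda_i})m_E>0$. This class is \emph{positive} on curves with $E\cdot C<0$, for instance on the $p$-exceptional ray, so you need to know which rays the MMP actually contracts.

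The case $\alpha=\frac{1}{\mu_0}$ is also not excluded, and the paper treats it separately. There your interpolation puts all the weight on the $\frac{1}{\mu_0}$-endpoint, so the argument rests entirely on the unproved inequality. It then gives only $\le 0$, not the strict negativity you assert; non-strict suffices for the claim, so that part is harmless.

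\textbf{The missing ingredient} is the two-ray structure $\rho(Z_i^k/S_i)=2$. The cone $\NE(Z_i^k/S_i)$ has exactly two extremal rays: $P_i^k$ (the $p_i$-exceptional ray for $k=0$, the flipped ray for $k\geq 1$) and the ray $Q_i^k$ contracted next.

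The paper uses this directly. By induction, $(K+B+\alpha\beta)\cdot P_i^k>0$: for $k=0$ this follows from the formula for $K_Z+B_Z+\alpha\beta_Z$ above and $E\cdot P_i^0<0$, and afterwards because the flipped ray of an $\alpha$-negative flip is $\alpha$-positive. When $\alpha<\frac{1}{\mu_0}$, $K+B+\alpha\beta$ is negative on a general fibre $F$, hence not nef over $S_i$, so the other ray $Q_i^k$ must be $\alpha$-negative. When $\alpha=\frac{1}{\mu_0}$, one uses instead that $F$ cannot lie on the ray of a bimeromorphic contraction.

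Your own suggestion, proving $E\cdot Q_i^k\ge 0$, can be completed the same way:
\begin{enumerate}
\item $E$ does not dominate the surface $S_i$, so $E\cdot F=0$.
\item $F\notin P_i^k$, since $P_i^k$ is contracted by a bimeromorphic morphism while $F$ moves in a covering family.
\item By induction $E\cdot P_i^k\le 0$. This holds for $k=0$. It is preserved because, over the flipping base, $E\equiv r(K+B+\frac{1}{\lambda_i}\beta)$ with $r\le 0$, and $K+B+\frac{1}{\lambda_i}\beta$ is negative on $Q_i^k$ and positive on $P_i^{k+1}$.
\item Writing $F=aP_i^k+bQ_i^k$ with $b>0$ then gives $E\cdot Q_i^k\ge 0$, which is exactly the sign your interpolation needs.
\end{enumerate}
With this, no fallback to the weaker formulation of \cite{BM97} is required.

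Finally, your discussion of type III links is outside the scope of the claim, which concerns only the type I and II links arising when $\lambda>\mu$. The sign assertion you make there is also unsupported, but it is not needed.
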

\begin{proof}[Proof of claim \ref{clm: improve-dis}]
For a maximal divisorial blowup $p_i:Z_i \to X_i$, 
 \[ K_{Z_i}+B_{Z_i}+\alpha\beta_{Z_i}= K_{Z_i}+B_{Z_i}+\frac{1}{\lambda_i}\beta_{Z_i}+(\alpha-\frac{1}{\lambda_{i}})\beta_{Z_i}\]
 \[=(p_i)^*(K_{X_i}+B_{X_i}+\frac{1}{\lambda_i}\beta_{X_i})+p_i^*(\alpha-\frac{1}{\lambda_i}\beta_{X_i})-(\alpha-\frac{1}{\lambda_i})m_i E_i\] 
 \[=(p_i)^*(K_{X_i}+B_{X_i}+\alpha\beta_{X_i})-(\alpha-\frac{1}{\lambda_i})m_iE_i\]
 where $(\alpha-\frac{1}{\lambda_i})m_i>0$. Therefore the $\alpha$-discrepancies are improved on $Z_i$ and hence, $(Z_i, B_{Z_i}+\alpha\beta_{Z_i})$ is also a glc pair.\\
 
Every link $X_i\rightarrow X_{i+1}$ is an outcome of $K+B+\frac{1}{\lambda}\beta$-MMP over $S_i$ after taking a maximal divisorial blowup $Z_i\overset{p_i}{\rightarrow} X_i $, consisting of a finite number of $K+B+\frac{1}{\lambda}\beta$-flips  \[
Z_i=Z_i^0\dashrightarrow Z_i^1...\dashrightarrow Z_i^m
\]
 followed by a divisorial contraction $q_i^m:Z_i^m \rightarrow X_i^{m+1}=X_{i+1}$.\\
 Now we claim that the birational map $Z_i^k\dashrightarrow Z_i^{k+1}$ over $X_i^{k+1}$ is a step of $K+B+\alpha \beta$-MMP. We prove this by induction on $k$. Firstly we have that $\alpha>\frac{1}{\lambda}$ and $K_{Z_i}+B_{Z_i}+\alpha \beta_{Z_i}=p_i^0(K_{X_i}+B_{X_i}+\alpha \beta_{X_i})-aE_i$ where $a>0$ as $p_i^0$ is contraction of a negative extremal ray. Therefore we have that $(K_{Z_i^0}+B_{Z_i^0} +\alpha \beta_{Z_i^0})\cdot P_i^0>0$ where $P_i^0$ is the negative extremal ray contracted by $p_i^0$.\\
 
Note that $\rho(Z_i/S_i)=2$. Thus the other extremal ray $Q_i^0$ of $\NE(Z_i)=\NE(Z^0/S_i)$ satisfies $(K_{Z_i^0}+B_{Z_i^0}+\alpha \beta_{Z_i^0})\cdot Q_i^0 <0$ as $(K_{Z_i^0}+B_{Z_i^0}+\alpha \beta_{Z_i^0})$ is not relatively nef over $S_i$.
 By induction, assume that we have $(K_{Z_i^k}+B_{Z_i^k}+\alpha \beta_{Z_i^k})\cdot P_i^k>0$, where $P_i^k$ is the extremal ray contracted by $p_i^k$. Let $Q_i^k$ be the other extremal ray in $\NE(Z_i^k/S_i)$. Note that since base $\dim S_i=2$, the general fibers of every map $Z_i^k \to S_i$ are isomorphic.
\[(K_{Z_i^k}+B_{Z_i^k}+\alpha\beta_{Z_i^k})\cdot F= (K_{X_i}+B_{X_i}+\alpha\beta_{X_i})\cdot F\]
Recall that $(K_{X_i}+B_{X_i}+\frac{1}{\mu}\beta_{X_i})\cdot F=0$. Thus it follows that
\[ (K_{X_i}+B_{X_i}+\alpha \beta_{X_i})\cdot F=(\alpha-\frac{1}{\mu})(-\mu(K_{X_i}+B_{X_i}))\cdot F\]
Note that if $\alpha < \frac{1}{\mu}$, then $K_{Z_i^k}+B_{Z_i^k}+\alpha \beta_{Z_i^k}$ is not nef over $S_i$ which implies that $(K_{Z_i^k}+B_{Z_i^k}+\alpha \beta_{Z_i^k})\cdot Q_i^k <0$. If $\alpha=\frac{1}{\mu}$, then $(K_{Z_i^k}+B_{Z_i^k}+\alpha \beta_{Z_i^k})\cdot F=0$. Suppose, if possible, that $(K_{Z_i^k}+B_{Z_i^k}+\alpha \beta_{Z_i^k})\cdot Q_i^k\geq 0$. Write $F=aP_i^k+bQ_i^k$ where $a,b\geq 0$. Since $(K_{Z_i^k}+B_{Z_i^k}+\alpha \beta_{Z_i^k})\cdot P_i^k >0, a$ must be $0$. $F=bQ_i^k$. Hence a general fiber $F$ lies on the ray $Q_i^k$. Hence it cannot be the next birational contraction. Thus $(K_{Z_i^k}+B_{Z_i^k}+\alpha \beta_{Z_i^k})\cdot Q_i^k <0$. 
  This shows that $Z_i^k\dashrightarrow Z_i^{k+1}$ over $X_i^{k+1}$ is a step of $K+B+\alpha \beta$-MMP. 
 
 Thus for every divisor $D$, $a(D, X_i,B_{X_i}+\alpha \beta_{X_i})\leq a(D, X_{i+1}, B_{X_{i+1}}+\alpha \beta_{X_{i+1}})$ and hence it proves that $\alpha$-discrepancy gets improved in each Sarkisov link.

\end{proof}

\begin{claim}\label{clm:alpha-gklt}
    The pair $(X_i,B_{X_i}+\alpha\beta_{X_i})$ is a generalized klt pair for sufficiently large $i$.
\end{claim}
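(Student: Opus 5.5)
We argue by contradiction in the spirit of \cite{BM97}. By Claim \ref{clm:glc-i}, for $i\gg0$ the pair $(X_i,B_{X_i}+\alpha\beta_{X_i})$ is generalized log canonical. Suppose that infinitely many of them are not generalized klt. Then for each such $i$ there is a prime divisor $F_i$ over $X_i$ with $a(F_i,X_i,B_{X_i}+\alpha\beta_{X_i})=-1$. We intend to show that such a divisor cannot exist, and we organise the argument around the $\theta$-discrepancies.

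The first step is to locate $F_i$ relative to $\mathcal{C}_\theta$. Since $(X_i,B_{X_i})\in\mathcal{C}_\theta$ and $\theta\le\epsilon<1$, every prime divisor on $X_i$ has coefficient at most $\epsilon$. Hence $F_i$ must be exceptional over $X_i$ and must lie over the non-klt locus produced by the moduli part $\alpha\beta_{X_i}$. By Claim \ref{clm: improve-dis}, $\alpha$-discrepancies do not decrease along the links, so
\[
a(F,X_j,B_{X_j}+\alpha\beta_{X_j})\geq a(F,X_i,B_{X_i}+\alpha\beta_{X_i})\quad\text{for all } j\geq i .
\]
Thus the set of divisors with discrepancy exactly $-1$ can only shrink as $i$ grows. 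Since the lc places of a fixed glc pair are finite in number modulo a fixed log resolution, there is a fixed finite set $\mathcal{F}$ of divisorial valuations such that, for $i\gg0$, every non-klt place of $(X_i,B_{X_i}+\alpha\beta_{X_i})$ belongs to $\mathcal{F}$.

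The second step compares thresholds. Write the discrepancy as
\[
a(F,X_i,B_{X_i}+t\beta_{X_i})=a(F,X_i,B_{X_i})-t\,m_F^{(i)} .
\]
For $F\in\mathcal{F}$ we have $a(F,X_i,B_{X_i})\ge-\theta(F)>-1$ with $\theta(F)\le\epsilon$. Letting $t$ increase from $1/\lambda_i$ to $\alpha$ pushes the discrepancy all the way down to $-1$, so
\[
(\alpha-1/\lambda_i)\,m_F^{(i)}\ge 1-\epsilon+\big(a(F,X_i,B_{X_i}+\tfrac{1}{\lambda_i}\beta_{X_i})+\theta(F)\big)\ge 1-\epsilon .
\]
The last inequality uses that $(X_i,B_{X_i}+\frac1{\lambda_i}\beta_{X_i})$ is $\mathcal{C}_\theta$-glc. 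Hence $m_F^{(i)}\ge(1-\epsilon)/(\alpha-1/\lambda_i)\to\infty$.

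The main obstacle is turning this blow-up into a contradiction. I would bound $m_F^{(i)}$ uniformly, using that $\beta_{X_i}$ is the homaloidal transform of the fixed class $\beta_{X'}$. On a common resolution $W$ of $X_i$ and $X'$ we have
\[
m_F^{(i)}=\mult_F\big(p^*\beta_{X_i}-q^*\beta_{X'}\big).
\]
Along the links $X_i\dashrightarrow X_{i+1}$ this difference changes only through the maximal divisorial blowups and the MMP steps over $S_i$, each of which is $K+B+\alpha\beta$-nonpositive by Claim \ref{clm: improve-dis}. By the negativity lemma \cite[Lemma 1.3]{Wang21}, $m_F^{(i)}$ is then nonincreasing in $i$ for a fixed valuation $F\in\mathcal{F}$. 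A nonincreasing sequence cannot tend to $\infty$, which is the desired contradiction. Hence $(X_i,B_{X_i}+\alpha\beta_{X_i})$ is generalized klt for $i\gg0$.

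If the monotonicity of $m_F^{(i)}$ proves delicate, the fallback is to apply Theorem \ref{thm:ACC} with $D=0$. The aim is to conclude that the glc thresholds $\tau_i$ lie in an ACC set, while their values on the finite set $\mathcal{F}$, namely $(a(F,X_i,B_{X_i})+1)/m_F^{(i)}$, form a sequence converging to $\alpha$ strictly from above. That would contradict the ACC property of this set.
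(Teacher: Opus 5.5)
\textbf{There is a genuine gap}, at exactly the step you flag as the main obstacle.

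Claim \ref{clm: improve-dis} and the negativity lemma only tell you that each step is $K+B+\alpha\beta$-nonpositive. That controls the combination $a(F,X_i,B_{X_i})-\alpha\, m_F^{(i)}$, not $m_F^{(i)}$ by itself. Monotonicity of $m_F^{(i)}$ would need the steps to be $\beta$-nonpositive, and nothing forces this. A flipping ray $Q$ in the two-ray game only satisfies $(K+B+t\beta)\cdot Q<0$ for $t=\frac{1}{\lambda_i}$ and $t=\alpha$. If $\beta\cdot Q>0$, the negativity lemma applied to $-\beta$ shows that $m_F$ strictly \emph{increases} for divisors over the flipped locus.

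Your first step has a related flaw. A glc pair can have infinitely many lc places: whenever two coefficient-one components meet on a log resolution, repeatedly blowing up their intersection gives new ones. So the shrinking of the set of $(-1)$-places does not produce a fixed finite $\mathcal{F}$ containing all non-klt places.

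The fallback via Theorem \ref{thm:ACC} also fails. If $F$ is a non-klt place of $(X_i,B_{X_i}+\alpha\beta_{X_i})$, then $m_F^{(i)}>0$ because $(X_i,B_{X_i})$ is klt. Hence $\tau_i=\alpha$ and $(a(F,X_i,B_{X_i})+1)/m_F^{(i)}=\alpha$ exactly, not strictly above $\alpha$. A constant sequence contradicts no ACC statement; the paper uses ACC only in Claim \ref{clm:glc-i}.

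\textbf{The missing idea is the one the paper uses.} Every step from $X_K$ to $X_i$ strictly increases the $\alpha$-discrepancy of every divisor centred in its non-isomorphism locus. This holds for the maximal divisorial blow-ups, which raise $\alpha$-discrepancies by $(\alpha-\frac{1}{\lambda_j})m_j$ times the multiplicity of $E_j$. It also holds for the $K+B+\alpha\beta$-negative MMP steps. Hence
$-1=a(F_i,X_K,B_{X_K}+\alpha\beta_{X_K})=a(F_i,X_i,B_{X_i}+\alpha\beta_{X_i})$
forces $X_K\dashrightarrow X_i$ to be an isomorphism near general points $x_K,x_i$ of the centres of $F_i$.

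The paper then transports the local threshold to the fixed model: $\frac{1}{\lambda_i}\le c_\theta(x_i;X_i)=c_\theta(x_K;X_K)<\alpha$. Since $c_\theta(\,\cdot\,;X_K)$ takes only finitely many values (Remark \ref{rmk:local threshold theta}), this contradicts $\frac{1}{\lambda_i}\to\alpha$.

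Your own route can be repaired with the same local-isomorphism fact:
\begin{enumerate}
\item For each of the finitely many lc centres of $(X_K,B_{X_K}+\alpha\beta_{X_K})$, choose one lc place whose centre is exactly that lc centre.
\item The local isomorphism shows that, for infinitely many $i$, a single fixed such $F$ remains an lc place on $X_i$, with $m_F^{(i)}=m_F^{(K)}$ constant.
\item This contradicts your bound $m_F^{(i)}\ge(1-\epsilon)/(\alpha-\frac{1}{\lambda_i})\to\infty$.
\end{enumerate}
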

\begin{proof}[Proof of claim \ref{clm:alpha-gklt}]
    By Claim \ref{clm:glc-i}, we can choose a natural number $K$ such that $(X_i, B_{X_i}+\alpha\beta_{X_i})$ is glc for all $i\geq K$. If possible, there are infinitely many $i$ such that the pair is not gklt. For each such $i$, we can choose a divisor $F_i$ over $X_i$ such that the discrepancy $a(F_i,X_i,B_{X_i}+\alpha\beta_{X_i})= -1$. By Claim \ref{clm: improve-dis}, 
    \[ -1\leq a(F_i,X_K,B_{X_K}+\alpha\beta_{X_K})\leq a(F_i,X_i,B_{X_i}+\alpha\beta_{X_i})=-1\]
    Let $(F_i)_{X_i}$ and $(F_i)_{X_K}$ be the centers of the divisor $F_i$ on $X_i$ and $X_K$ respectively. Let $x_i$
and $x_K$ be two general points in the centers. Then equality of discrepancy tells that the map $X_K\dashrightarrow X_i$ is an isomorphism in neighborhoods of $x_i$ and $x_K$. Thus the local $c_\theta$-generalized threshold must agree.
Since boundary coefficients are at most $\epsilon$, the discrepancy is at least $\geq-\epsilon>-1; F_i$ must be exceptional as the discrepancy of a boundary divisor is the same as the generalized discrepancy of that boundary divisor. 
Now, observe that $\alpha$-discrepancy of $F_i$ is $-1 < -\theta(F_i)$ as $\theta$ takes values in $[0,1)$. 
Recall that \[a(F_i, X_K, B_{X_K}+\alpha\beta_{X_K})=a(F_i, X_K, B_{X_K})- \alpha m_{F_i}\]
Note that $m_{F_i}>0$, otherwise $a(F_i,X_K,B_{X_K}+\alpha\beta_{X_K})=a(F_i,X_K,B_{X_K})=-1$ would imply $(X_K,B_{X_K})$ is not klt which is a contradiction.
Thus it follows that
\[a(F_i,X_K,B_{X_K})-\alpha m_{F_i}< -\theta(F_i)\]
\[\frac{a(F_i, X_K, B_{X_K})+\theta(F_i)}{m_{F_i}}< \alpha\]

Therefore, it follows that 
\[ \frac{1}{\lambda_i}\leq c_\theta(x_i; X_i)=c_\theta(x_K; X_K) < \alpha\]
But by Remark \ref{rmk:local threshold theta}, the local threshold on $X_K$ can take only finitely many values. Since $\frac{1}{\lambda_i}\to \alpha$, this gives a contradiction. Hence for sufficiently large $i$, $(X_i,B_{X_i}+\alpha\beta_{X_i})$ is a generalized klt pair.\\
\end{proof}

Now choose a sufficiently large $K$ such that for all $i\geq K$, $(X_i, B_{X_i}+\alpha\beta_{X_i})$ is gklt.
$a(E, X_K,B_{X_K}+\alpha\beta_{X_K})\leq a(E, X_i,B_{X_i}+\alpha\beta_{X_i})$ as the $\alpha$-discrepancy gets improved.
Recall that the crepant divisor $E_i$ satisfies the following:
\[a(E_i, X_i,B_{X_i}+\alpha\beta_{X_1})=-\theta(E_i)-(\alpha-\frac{1}{\lambda_i})m_i< -\theta(E_i)\leq 0\]
Since $E_i$ survives as a divisor on $X_{i+1}$ by Claim \ref{clm:E survive}, the discrepancy would be $a(E_i,X_{i+1}, B_{X_{i+1}}+\alpha\beta_{X_{i+1}})=-\theta(E_i)$. 
This implies that all $E_i$ appearing in the above process are distinct. Note that all $E_i$ are exceptional over $X_K$. The process shows that there are infinitely many divisors $E$ such that $a(E, X_K, B_{X_K}+\alpha\beta_{X_K})<0$ since $(X_K, B_{X_K}+\alpha \beta_{X_K})$ is a generalized klt pair.
This gives a contradiction.
Thus the sequence has to terminate.

\end{proof}

\bibliographystyle{alpha}
\bibliography{bibliography}

\end{document}